\setlist{nosep}
\titleformat{\section}{\large\centering\scshape}{\thesection.\quad}{0em}{}
\titlespacing*{\section}{0mm}{1mm}{1mm}
\declaretheoremstyle[
    headfont=\bfseries, 
    notebraces={--- \makefirstuc}{},
    notefont=\normalfont\itshape,
    bodyfont=\normalfont,
    headpunct={\vspace{0.15\topsep}},
    spaceabove=\topsep,
    spacebelow=0pt,
    postheadspace=\newline,
    qed=$\triangleleft$,
]{mystyle}
\theoremstyle{mystyle}
\declaretheorem[style=mystyle,name=Theorem]{thm}
\crefname{thm}{theorem}{theorems}
\Crefname{thm}{Theorem}{Theorems}
\declaretheorem[style=mystyle,name=Definition,sibling=thm]{dfn}
\crefname{dfn}{definition}{definitions}
\Crefname{dfn}{Definition}{Definitions}
\declaretheorem[style=mystyle,name=Lemma,sibling=thm]{lmm}
\crefname{lmm}{lemma}{lemmas}
\Crefname{lmm}{Lemma}{Lemmas}
\declaretheorem[style=mystyle,name=Corollary,sibling=thm]{crl}
\crefname{crl}{corollary}{corollaries}
\Crefname{crl}{Corollary}{Corrolaries}
\crefname{prp}{proposition}{propositions}
\Crefname{prp}{Proposition}{Propositions}
\crefname{exm}{example}{examples}
\Crefname{exm}{Example}{Examples}
\crefname{rmk}{remark}{remarks}
\Crefname{rmk}{Remark}{Remarks}
\crefname{rmd}{reminder}{reminders}
\Crefname{rmd}{Reminder}{Reminders}
\crefname{pdx}{paradox}{paradoxes}
\Crefname{pdx}{Paradox}{Paradoxes}
\crefname{clm}{claim}{claims}
\Crefname{clm}{Claim}{Claims}
\declaretheorem[style=mystyle,name=Fact,sibling=thm]{fct}
\crefname{fct}{fact}{facts}
\Crefname{fct}{Fact}{Facts}
\crefname{qst}{question}{questions}
\Crefname{qst}{Question}{Questions}
\declaretheorem[style=mystyle,name=Theorem,numbered=no]{thm*}
\declaretheorem[style=mystyle,name=Definition,numbered=no]{dfn*}
\declaretheorem[style=mystyle,name=Lemma,numbered=no]{lmm*}
\declaretheorem[style=mystyle,name=Corollary,numbered=no]{crl*}
\declaretheorem[style=mystyle,name=Proposition,numbered=no]{prp*}
\declaretheorem[style=mystyle,name=Example,numbered=no]{exm*}
\declaretheorem[style=mystyle,name=Remark,numbered=no]{rmk*}
\declaretheorem[style=mystyle,name=Reminder,numbered=no]{rmd*}
\declaretheorem[style=mystyle,name=Paradox,numbered=no]{pdx*}
\declaretheorem[style=mystyle,name=Question,numbered=no]{qst*}
\declaretheorem[style=mystyle,name=Claim,numbered=no]{clm*}
\renewenvironment{proof}[1][\proofname]{
  \pushQED{\qed}%
  \normalfont%
  \topsep0pt \partopsep0pt%
  \trivlist%
  \item[\hskip\labelsep%
        \itshape%
    #1\@addpunct{.}]\ignorespaces%
}{%
  \popQED\endtrivlist\@endpefalse%
  \addvspace{0pt}%
}
\newenvironment{subproof}{\begin{proof}[Proof of claim]}{\renewcommand{\qedsymbol}{$\blacksquare$}\end{proof}}
\let\phi\varphi
\let\epsilon\varepsilon
\let\emptyset\varnothing
\let\subset\subseteq
\let\supset\supseteq
\let\bar\overline
\renewcommand{\c}{\mathcal}
\newcommand{\bb}{\mathbb} 
\renewcommand{\b}{\mathbf}
\newcommand{\s}{\mathsf}
\renewcommand{\r}{\mathrm}
\newcommand{\f}{\mathfrak}
\newcommand{\lo}{\lor}				
\newcommand{\emp}{\emptyset}		
\renewcommand{\Cap}{\bigcap}		
\renewcommand{\Cup}{\bigcup}		
\newcommand{\La}{\bigwedge\!} 		
\newcommand{\ent}{\vdash} 					
\newcommand{\md}{\vDash} 					
\newcommand{\fc}{\Vdash} 					
\newcommand{\ab}[1]{\left\langle#1\right\rangle} 		
\newcommand{\ap}[1]{\text{``}\,#1\,\text{''}} 			
\newcommand{\card}[1]{\left|#1\right|} 					
\newcommand{\st}[1]{\left\{#1\right\}} 					
\newcommand{\smid}{\ \middle |\ } 						
\newcommand{\ran}{\r{ran}}
\newcommand{\ot}{\r{ot}}
\newcommand{\cof}{\r{cof}}
\newcommand{\add}{\r{add}}
\newcommand{\id}{\s{id}}
\newcommand{\pow}{\s{pow}}
\newcommand{\Loc}{\r{Loc}}
\newcommand{\Split}{\r{Split}}
\newcommand{\suc}{\r{suc}}
\newcommand{\supp}{\r{supp}}
\newcommand{\gcs}{{}^{\kappa}2}
\newcommand{\bs}{{}^{\omega}\omega}
\newcommand{\gbs}{{}^{\kappa}\kappa}
\newcommand{\gfbs}{{}^{<\kappa}\kappa}
\newcommand{\forcing}[1]{{\bb S^{#1}_\kappa}}
\newcommand{\dstar}[1]{\f d_\kappa^{#1}(\in^*)}
\newcommand{\ft}{\mathbbm 1}
\renewcommand{\emph}{\textbf}
\title{\sc Separating Many Localisation Cardinals\\on the Generalised Baire Space}
\author{Tristan van der Vlugt\thanks{{\sc Universität Hamburg, Fachbereich Mathematik} $-$ tristan.van.der.vlugt@uni-hamburg.de $-$ The author wishes to thank Jörg Brendle for valuable discussions, suggestions and extensive comments.}}
\date{\vspace{-5mm}March 5, 2022}
\begin{document}

\maketitle\vspace{-10mm}

\begin{abstract}
\noindent Given a cofinal cardinal function $h\in\gbs$ for $\kappa$ inaccessible, we consider the dominating $h$-localisation number, that is, the least cardinality of a dominating set of $h$-slaloms such that every $\kappa$-real is localised by a slalom in the dominating set. It was proved in \cite{BBFM} that the dominating localisation numbers can be consistently different for two functions $h$ (the identity function and the power function). We will construct a $\kappa$-sized family of functions $h$ and their corresponding localisation numbers, and use a ${\leq}\kappa$-supported product of a cofinality-preserving forcing to prove that any simultaneous assignment of these localisation numbers to cardinals above $\kappa$ is consistent. This answers an open question from \cite{BBFM}.
\end{abstract}

In an effort to generalise the cardinal characteristics related to the null ideal from the context of the continuum $\bs$ to the generalised Baire space $\gbs$, the authors of \cite{BBFM} considered localisation cardinals. These cardinals were first described in the context of $\bs$ by Tomek Bartoszy\'nski \cite{Bart87}, and are defined using slaloms. Let $\kappa$ be a regular strong limit cardinal (hence $\kappa$ is inaccessible or equal to $\omega$) and let $h\in\gbs$. An \emph{$h$-slalom} is any function $\phi:\kappa\to [\kappa]^{<\kappa}$ such that $|\phi(\alpha)|\leq |h(\alpha)|$ for all $\alpha\in\kappa$.

For $f\in\gbs$, we say $f\in^*\phi$, or $f$ is \emph{localised} by $\phi$, if there exists some $\xi<\kappa$ such that $f(\alpha)\in \phi(\alpha)$ for all $\alpha\in[\xi,\kappa)$. We denote the set of all $h$-slaloms by $\Loc_h$.

Using these concepts, we can define the following two cardinal characteristics, sometimes called \emph{localisation cardinals}:
\begin{align*}
\f b_\kappa^h(\in^*) &= \text{the least cardinality of a family }\c F\subset \gbs\text{ such that }\forall \phi\in \Loc_h \exists f\in\c F(f\notin^*\phi),\\
\f d_\kappa^h(\in^*) &= \text{the least cardinality of a family }\Phi\subset \Loc_h\text{ such that }\forall f\in \gbs \exists \phi\in\Phi(f\in^*\phi).
\end{align*}
In the case that $\kappa=\omega$ these cardinals give a combinatorial definition of two of the cardinal invariants of the Lebesgue null ideal $\c N$:
\begin{align*}
\add(\c N) &= \text{the least cardinality of a family }\c A\subset \c N\text{ such that }\Cup \c A\notin \c N,\\
\cof(\c N) &= \text{the least cardinality of a family }\c C\subset \c N\text{ such that }\forall N\in\c N\exists C\in \c C(N\subset C)
\end{align*}
Bartoszy\'nski introduced slaloms in \cite{Bart87} to give the following combinatorial definition to $\add(\c N)$ and $\cof(\c N)$.

\begin{fct}
\renewcommand{\qedsymbol}{$\square$}
$\add(\c N)=\f b_\omega^h(\in^*)$ and $\cof(\c N)=\f d_\omega^h(\in^*)$.\qedhere
\end{fct}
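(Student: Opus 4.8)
The statement is Bartoszy\'nski's classical characterisation of the additivity and cofinality of the null ideal, from \cite{Bart87}; I outline how I would prove it. For $\kappa=\omega$ a cofinal cardinal function $h$ simply satisfies $|h(n)|\to\infty$, which is all we shall use. The plan is to fix a sufficiently fast interval partition $\langle I_n:n\in\omega\rangle$ of $\omega$, identify $\cs$ with the product space $\prod_n 2^{I_n}$ carrying its product measure $\mu$, and exhibit Tukey morphisms in both directions between the relational systems $(\c N,\c N,\subset)$ and $(\bs,\Loc_h,\in^*)$, after the harmless re-codings that replace $\bs$ by $\prod_n 2^{I_n}$ and arbitrary slaloms by ``slaloms along $\langle I_n\rangle$''. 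Since by definition $\f b_\omega^h(\in^*)$ and $\f d_\omega^h(\in^*)$ are the bounding and dominating numbers of $(\bs,\Loc_h,\in^*)$, while $\add(\c N)$ and $\cof(\c N)$ are those of $(\c N,\c N,\subset)$, the two equalities will follow from the existence of these morphisms.

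For the first morphism, into $(\c N,\c N,\subset)$, I would attach to each $f$ a null set $A_f$ encoding the value $f(n)$ redundantly on block $I_n$, designed so that membership in $A_f$ already follows from agreeing with $f$ on all but finitely many blocks; fast growth of $\langle I_n\rangle$ makes $A_f$ null by a Borel--Cantelli estimate. To a null set $B$ I would attach a slalom $\phi_B$ read off from a fixed $G_\delta$-cover $B\subset\bigcap_k U_k$ with $\sum_k\mu(U_k)<\infty$: at block $I_n$ one lets $\phi_B(n)$ collect the few $I_n$-patterns carried by the basic clopen pieces of $U_{k(n)}$, for an index $k(n)$ with $\mu(U_{k(n)})$ small enough that $|\phi_B(n)|\le|h(n)|$. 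One then checks that $A_f\subset B$ forces $f(n)\in\phi_B(n)$ for all large $n$, that is, $f\in^*\phi_B$; this gives $\add(\c N)\le\f b_\omega^h(\in^*)$ and $\f d_\omega^h(\in^*)\le\cof(\c N)$. For the reverse morphism I would use the standard ``small set'' description of $\c N$, arranged with some care along the fixed partition $\langle I_n\rangle$, to attach to each null $B$ a function $g_B\in\cs$ coding a summable slalom $\langle S_n\rangle$ with $B\subset\{x:\exists^\infty n\ x(n)\in S_n\}$, and to each slalom $\phi$ a null set $C_\phi$ such that $g_B\in^*\phi$ forces $B\subset C_\phi$; this gives $\f b_\omega^h(\in^*)\le\add(\c N)$ and $\cof(\c N)\le\f d_\omega^h(\in^*)$.

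What is genuinely more than bookkeeping here is the tension between the ``$\forall^\infty$'' quantifier built into $\in^*$-localisation and the ``$\exists^\infty$'' quantifier governing the small-set description of $\c N$. This is what forces the redundant design of $A_f$ in the first morphism and, in the second, the non-obvious point that a family of slaloms catching \emph{every} real is, for the purpose of computing the dominating number, no weaker than a family that is cofinal under eventual inclusion among all summable slaloms along $\langle I_n\rangle$ (and dually for unbounded families). These points are moreover entangled with the several re-codings needed to pass between slalom width $|\phi(n)|\le|h(n)|$, unconstrained summable width, and the $\cs$ versus $\bs$ pictures while keeping every summability side-condition valid; arranging the growth of $\langle I_n\rangle$ so that all these constraints hold at once is where the real care goes, and is the main obstacle.
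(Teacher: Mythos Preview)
The paper does not prove this statement: it is recorded as a \emph{Fact} with a bare citation to \cite{Bart87} and a $\square$, with no argument given. So there is no proof in the paper against which to compare your proposal.

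That said, your outline is precisely the standard route from Bartoszy\'nski's original work and the Bartoszy\'nski--Judah monograph: set up Galois--Tukey morphisms in both directions between $(\c N,\c N,\subset)$ and $(\bs,\Loc_h,\in^*)$, passing through the ``small sets'' description of null sets ($\limsup$'s of clopen sets of summable measure) and an interval-partition re-coding to move between summable-width slaloms and $h$-bounded ones. Your identification of the $\forall^\infty$/$\exists^\infty$ mismatch as the genuine obstacle, and of the growth conditions on $\langle I_n\rangle$ as the place where all the side-conditions have to be reconciled, is exactly right. One small point of phrasing: when you say membership in $A_f$ ``follows from agreeing with $f$ on all but finitely many blocks'', that alone would make $A_f$ countable and no Borel--Cantelli would be needed; what you presumably intend (and what the argument actually uses) is a $\limsup$-type set where \emph{infinite} agreement suffices, so that Borel--Cantelli applies and the later extraction of $\phi_B$ from a $G_\delta$-cover can be made to work. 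With that adjustment your sketch matches the classical proof.
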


The choice of $h\in\bs$ is irrelevant here, as it does not influence the cardinality of $\f b_\omega^h(\in^*)$ and $\f d_\omega^h(\in^*)$. This turns out to be different in the generalised case. Recently,  it was proved in \cite{BBFM} that $\dstar h$ can consistently have different values for different $h\in\gbs$. In particular, it was shown that $\dstar{\pow}<\dstar{\id}$ is consistent, where $\pow:\alpha\mapsto 2^\alpha$ and $\id:\alpha\mapsto \alpha$. 

This consistency was proved using a generalisation of Sacks forcing on $\gcs$ that has the generalised $\pow$-Sacks property, but not the generalised $\id$-Sacks property.

\begin{dfn}
Let $h\in\gbs$. A forcing $\bb P$ has the \emph{generalised $h$-Sacks property} if for every $\bb P$-name $\dot f$ and $p\in \bb P$ such that $p\fc\ap{\dot f:\check\kappa\to\check\kappa}$ there exists an $h$-slalom $\phi$ and $q\leq p$ such that $q\fc\ap{\dot f(\check \alpha)\in\check \phi(\check\alpha)}$ for all $\alpha<\kappa$. We will from now on simply say $h$-Sacks property and omit ``generalised''.
\end{dfn}

Hence, if $\Phi\subset\Loc_h$ is a family as in the definition of $\dstar h$ in the ground model, then $\Phi$ will still witness the size of $\dstar h$ in the extension by a forcing with the $h$-Sacks property. Therefore, the generalised Sacks forcing is unable to increase the size of $\dstar \pow$, but it is shown to be possible to increase the size of $\dstar\id$ by using an iteration or a product of the forcing.

In this text we will answer an open question from \cite{BBFM} and prove that there exist $\kappa$ many functions $h_\xi\in\gbs$ and cardinals $\lambda_\xi>\kappa$ such that it is simultaneously consistent that $\dstar{h_\xi}=\lambda_\xi$ for all $\xi<\kappa$. The strategy will be the same, in that we consider a product of Sacks-like forcings that have the $h_\xi$-Sacks property, but not the $h_\eta$-Sacks property for different $\xi$ and $\eta$. In the first section, we introduce the Sacks-like forcing $\forcing h$ and prove that it preserves cardinals and cofinalities and use fusion to show that it satisfies certain Sacks properties. In the second section we consider products of such forcings, we show that properties such as the preservation of cardinals and cofinalities and the relevant Sacks properties are preserved under ${\leq}\kappa$-support products and we use this to prove the consistency of our result.

For the sake of brevity, from now on we will assume that $\kappa$ denotes a strongly inaccessible cardinal. We will also fix the convention that $h,H,F\in\gbs$ denote increasing cofinal cardinal functions (i.e. $\ran(h)$ is cofinal in $\kappa$ and $h(\alpha)$ is a cardinal for each $\alpha<\kappa$). This convention extends to $h_\xi$, $F_0$, and other subscripts.

\section*{The Forcing}

Let us establish some notation to discuss trees on $\gbs$ before we define our  forcing notion.

Let $T\subset{}^{<\kappa}\kappa$ be a tree, then we let $[T]$ denote the set of branches of $T$. Usually $[T]\subset\gbs$. We call $u\in T$ a {\bf non-splitting} node (of $T$) if there is a unique $\beta<\kappa$ such that $u^\frown\beta\in T$ and otherwise we call $u$ an {\bf $\alpha$-splitting} node (of $T$), where $\alpha\leq\card{\st{\beta<\kappa\mid u^\frown\beta\in T}}$. We say $u$ is a {\bf splitting} node if it is a 2-splitting node. If $u$ is an $\alpha$-splitting node, but not a $\xi$-splitting node for any $\xi$ with $|\xi|>|\alpha|$, then we say that $u$ is a {\bf sharp} $\alpha$-splitting node. We let $u\in \Split_\alpha(T)$ iff $u$ is splitting and $\ot(\st{\beta<\ot(u)\mid u\restriction \beta\text{ is splitting}})=\alpha$. If $u\in T$ is splitting, let $\suc(u,T)=\st{v\in T\mid \exists \beta<\kappa(v=u^\frown\beta)}$.

\begin{dfn}\label{forcing definition}
The conditions of the forcing $\forcing{h}$ are trees $T\subset \gfbs$ that satisfy the following properties:
\begin{enumerate}[label=(\roman*)]
\item for any $u\in T$ there exists $v\in T$ such that $u\subset v$ and $v$ is splitting,
\item if $u\in \Split_\alpha(T)$, then $u$ is an $h(\alpha)$-splitting node in $T$,
\item if $\ab{u_\alpha\mid \alpha<\gamma}$ is an increasing sequence of splitting nodes, then $\Cup_{\alpha<\gamma}u_\alpha$ is splitting. 
\end{enumerate}
The order is defined as $T\leq_{\forcing h} S$ (here $T$ is stronger than $S$) iff $T\subset S$ and for every $\alpha<\kappa$ and $u\in T$ a splitting node, if $\suc(u,T)\neq \suc(u,S)$, then $|\suc(u,T)|<|\suc(u,S)|$. If the forcing notion is clear from context, we will write $T\leq S$ in place of $T\leq_{\forcing h}S$
\end{dfn}

Note that conditions (i) and (iii) imply that $[T]\subset\gbs$ and that the set of splitting nodes is club in $T$. We naturally want $\forcing{h}$ to preserve cardinalities. If we assume that $\b V\md\ap{2^\kappa=\kappa^+}$, then it is clear that $\forcing{h}$ has the ${<}\kappa^{++}$-chain condition, since $|\forcing{h}|\leq |\c P(\gfbs)|=2^\kappa=\kappa^+$, where the former equality is implied by $\kappa^{<\kappa}=\kappa$, which in turn follows from $\kappa$ being inaccessible. It is therefore clear that cardinalities above $\kappa^+$ are preserved under assumption of $\b V\md\ap{2^\kappa=\kappa^+}$.

To preserve cardinalities less than or equal to $\kappa$, we show that $\forcing{h}$ is ${<}\kappa$-closed.

\begin{lmm}\label{closure}
$\forcing{h}$ is ${<}\kappa$-closed. That is, for any $\lambda<\kappa$, if $\ab{T_\xi\mid \xi<\lambda}$ is a descending chain of conditions, then there exists a condition $T$ such that $T\leq T_\xi$ for all $\xi<\lambda$.
\end{lmm}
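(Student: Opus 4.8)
The natural candidate for the lower bound of the chain is the intersection $T = \bigcap_{\xi<\lambda} T_\xi$. The plan is to verify that this $T$ is a condition, i.e.\ that it satisfies properties (i)--(iii) of \cref{forcing definition}, and that $T \leq T_\xi$ for all $\xi$. Since each $T_\xi$ is a tree and an intersection of trees is a tree, the only real work is in the splitting structure. The key structural observation I would establish first is a \emph{coherence} property of descending chains: if $\xi < \eta$ and $u \in T_\eta$ is a splitting node of $T_\eta$, then $u$ is also a splitting node of $T_\xi$ with $\suc(u, T_\eta) \subseteq \suc(u, T_\xi)$, and moreover $u$ occupies the ``same'' position in the splitting hierarchy, i.e.\ $u \in \Split_\alpha(T_\eta)$ implies $u \in \Split_\alpha(T_\xi)$. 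This follows by induction from the definition of $\leq_{\forcing h}$ together with property (iii): below a splitting node $u$, the condition $T_\eta$ cannot have \emph{fewer} splitting nodes on the path to $u$ than $T_\xi$ does, because $T_\eta \subseteq T_\xi$ forces the splitting nodes of $T_\eta$ along that path to be a subset of those of $T_\xi$, and conversely a node splitting in $T_\xi$ that lies below $u \in T_\eta$ must still split in $T_\eta$ (otherwise $u$ would not survive). Getting this bookkeeping exactly right — that the map $\alpha \mapsto \Split_\alpha$ is preserved down the chain — is where the argument needs care.

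Granting coherence, I would argue as follows. For property (iii) (limits of increasing sequences of splitting nodes of $T$ are splitting in $T$): if $\langle u_\gamma \mid \gamma < \delta\rangle$ is increasing with each $u_\gamma$ splitting in $T$, then each $u_\gamma$ is splitting in every $T_\xi$, so $\bigcup_\gamma u_\gamma$ is splitting in every $T_\xi$ by property (iii) applied to $T_\xi$; hence at least two immediate successors of $\bigcup_\gamma u_\gamma$ lie in every $T_\xi$ (here I use that coherence pins down which successors persist), so they lie in $T$, and $\bigcup_\gamma u_\gamma \in T$ is splitting. For property (ii): if $u \in \Split_\alpha(T)$, then by coherence $u \in \Split_\alpha(T_\xi)$ for every $\xi$, so $u$ is $h(\alpha)$-splitting in each $T_\xi$; I then need the set $\bigcap_\xi \suc(u, T_\xi)$ to still have size $\geq |h(\alpha)|$. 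This is the one place where $\lambda < \kappa$ and the $<\kappa$-closure flavour is genuinely used: the sequence $\langle \suc(u, T_\xi) \mid \xi < \lambda\rangle$ is a $\subseteq$-decreasing sequence (by coherence) of subsets of $\kappa$ each of size $|h(\alpha)|$; since the order $\leq_{\forcing h}$ only permits \emph{strict} shrinking of a successor set (if it changes at all), and $|h(\alpha)|$ is a cardinal, a decreasing $\lambda$-chain of such sets either stabilises or would have to descend through $\lambda$-many strictly smaller cardinals below $|h(\alpha)|$ — impossible once $\lambda < \cf(|h(\alpha)|)$ or more simply because there is no infinite strictly descending chain of cardinals. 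Hence $\suc(u, T_\xi)$ stabilises to some fixed set of the right size, which is then $\suc(u, T)$. This simultaneously handles property (i): given $u \in T$, work inside the stabilised picture to find a splitting node above $u$ in the common refinement, noting the splitting nodes form a club in each $T_\xi$ and coherence aligns these clubs.

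Finally, $T \leq T_\xi$ is essentially immediate: $T \subseteq T_\xi$ by construction, and for a splitting node $u$ of $T$, either $\suc(u,T) = \suc(u, T_\xi)$ or, by the stabilisation argument, $\suc(u,T) = \suc(u, T_\eta)$ for all large $\eta$, and $|\suc(u, T_\eta)| < |\suc(u, T_\xi)|$ whenever the successor set genuinely changed between $\xi$ and $\eta$ — which is exactly the inequality required by $\leq_{\forcing h}$. The main obstacle, as flagged, is the coherence lemma: making precise and proving that splitting nodes and their $\Split_\alpha$-indices are preserved as one descends the chain, so that properties (ii) and (iii) for the intersection can be reduced cleanly to the same properties for each $T_\xi$. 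Everything downstream of coherence is routine.
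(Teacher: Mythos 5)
Your overall strategy is the paper's: take $T=\Cap_{\xi<\lambda}T_\xi$, and the one substantive tool you need --- that for each $u\in T$ the sequence $\ab{\suc(u,T_\xi)\mid\xi<\lambda}$ stabilises, because its cardinalities form a weakly descending sequence of cardinals and the definition of $\leq_{\forcing h}$ forces the set itself to be constant once the cardinality is --- is exactly the Claim the paper isolates. Your verifications of (i)--(iii) and of $T\leq T_\xi$ then run along the same lines as the paper's.

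However, your ``coherence'' lemma is false as stated, and the justification you give for it does not work. You claim that for $\xi<\eta$ a node $u\in\Split_\alpha(T_\eta)$ also lies in $\Split_\alpha(T_\xi)$, arguing that ``a node splitting in $T_\xi$ that lies below $u\in T_\eta$ must still split in $T_\eta$ (otherwise $u$ would not survive).'' Nothing forces this: if $v\subset u$ is splitting in $T_\xi$, the stronger condition $T_\eta$ may discard all but one successor of $v$ (keeping the one below $u$), which is permitted by the order since $|\suc(v,T_\eta)|=1<|\suc(v,T_\xi)|$, and $u$ survives untouched. Such pruning is consistent with \Cref{forcing definition} (e.g.\ when $v$ has many successors in $T_\xi$), so the splitting index of $u$ can genuinely \emph{increase} when passing to the weaker condition; only $u\in\Split_\alpha(T_\eta)\Rightarrow u\in\Split_{\alpha'}(T_\xi)$ for some $\alpha'\geq\alpha$ is true. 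The damage is contained: in your verification of (ii) you only ever need that $u\in\Split_\alpha(T)$ is $h(\alpha)$-splitting in some $T_\xi$ with $\suc(u,T_\xi)=\suc(u,T)$, and the weakened statement gives that $u$ is $h(\beta)$-splitting there for some $\beta\geq\alpha$ --- but closing this requires the standing convention that $h$ is increasing, which your argument never invokes because the (false) exact-preservation claim would have made it unnecessary. Replace coherence by the inequality version, cite monotonicity of $h$, and route (iii) and (i) through the stabilisation claim (plus, for (i), the intersection of the $\lambda<\kappa$ many clubs of splitting levels along a branch, after checking branches of $T$ have length $\kappa$), and the proof is sound.
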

\begin{proof}
The condition $T$ that is below all $T_\xi$ will be simply $T=\Cap_{\xi}T_\xi$. Clearly $T$ is a tree as well. We have to show that $T\in\forcing{h}$, which we do by veryfing points (i), (ii) and (iii) from \Cref{forcing definition}. We will make use of the following claim.

{\bf Claim.}\quad If $u\in T=\Cap T_\xi$, then there is $\eta<\lambda$ such that $\suc(u,T)=\suc(u,T_\xi)$ for all $\xi\in[\eta,\lambda)$.

\begin{subproof}
Suppose that $u\in T$, and let $\lambda_\xi=|\suc(u,T_{\xi})|$, then the ordering on $\forcing{h}$ dictates that $\ab{\lambda_\xi\mid \xi<\lambda}$ is a descending sequence of cardinals, hence there is $\eta<\lambda$ such that $\lambda_\xi=\lambda_\eta$ for all $\xi\in[\eta,\lambda)$. But then $\suc(u,T_\xi)=\suc(u,T_\eta)$ for all $\xi\in[\eta,\lambda)$ by the ordering of $\forcing{h}$.
\end{subproof}

(i)\quad Let $u\in T$, and let $f\in[T]$ be a branch for which $u\subset f$. If $\ot(f)<\kappa$, then $f\in T_\xi$ for each $\xi$, thus by the claim there is some $\xi<\lambda$ for which $\suc(f,T_\xi)=\suc(f,T)=\emp$. Then clearly $T_\xi\notin\forcing{h}$, which is a contradiction, hence $\ot(f)=\kappa$. Let $S_\xi=\st{\alpha\in[\ot(u),\kappa)\mid f\restriction \alpha\text{ is splitting in }T_\xi}$, then since $T_\xi\in\forcing{h}$ satisfies (i) and (iii), we see that $S_\xi$ is a club set. But then $\Cap_{\xi<\lambda}S_\xi$ is club. Any $v\in \Cap_{\xi<\lambda}S_\xi$ is splitting in all $T_\xi$, thus by the claim it is splitting in $T$, and by definition of $S_\xi$ it follows that $u\subset v$.

(ii)\quad If $u\in\Split_\alpha(T)$, then by the claim there is $\xi<\lambda$ such that $\suc(u,T_\xi)=\suc(u,T)$. If $v\subset u$ is splitting in $T$, then $v$ is splitting in $T_\xi$ as well, therefore $u\in\Split_\beta(T_\xi)$ for some $\beta\geq \alpha$. This shows that $u$ is $h(\beta)$-splitting in $T_\xi$, hence it is $h(\beta)$-splitting in $T$ and by $h(\alpha)\leq h(\beta)$ we see that $u$ is also $h(\alpha)$-splitting in $T$.

(iii)\quad Let $\ab{u_\alpha\mid \alpha<\gamma}$ be an increasing sequence of splitting nodes in $T$, then for every $\xi<\lambda$ we also see that $\ab{u_\alpha\mid \alpha<\gamma}$ is a sequence of splitting nodes in $T_\xi$, and thus $u=\Cup_{\alpha<\gamma}u_\alpha$ is a splitting node in all $T_\xi$, hence by the claim $u$ is splitting in $T$.
\end{proof}

\begin{crl}
$\forcing{h}$ preserves all cardinalities and cofinalities $\leq\kappa$.\renewcommand{\qedsymbol}{$\square$}\qedhere
\end{crl}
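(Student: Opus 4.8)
The plan is to read this off \Cref{closure} via the standard fact that a ${<}\kappa$-closed forcing adds no new sequences of length $<\kappa$ of ground-model elements, from which preservation of cardinals and cofinalities $\leq\kappa$ is immediate.

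First I would verify the no-new-short-sequences property. Let $\lambda<\kappa$, let $\dot g$ be a $\forcing h$-name, and suppose $p\fc\ap{\dot g\text{ is a function from }\check\lambda\text{ into the ordinals}}$. Recursively construct a $\leq$-descending sequence $\ab{p_\xi\mid\xi\leq\lambda}$ of conditions with $p_0\leq p$ as follows: given $p_\xi$, choose $p_{\xi+1}\leq p_\xi$ deciding the value $\dot g(\check\xi)$; at a limit stage $\delta\leq\lambda$, use \Cref{closure} to pick $p_\delta\leq p_\xi$ for all $\xi<\delta$ — this is where ${<}\kappa$-closure enters, and it is available precisely because $\delta\leq\lambda<\kappa$. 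Then $p_\lambda$ decides $\dot g(\check\xi)$ for every $\xi<\lambda$ at once, hence $p_\lambda$ forces $\dot g$ to equal a specific function lying in $\b V$. Since this construction can be carried out below any condition forcing $\dot g$ to be such a function, the set of conditions forcing $\dot g$ into $\b V$ is dense in that region, so by genericity every function from an ordinal $<\kappa$ into the ordinals belongs to $\b V$ in any generic extension $\b V[G]$.

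With this in hand, preservation follows by the routine argument. If some cardinal $\mu\leq\kappa$ of $\b V$ were collapsed in $\b V[G]$, then $\b V[G]$ would contain a surjection from some $\nu<\mu\leq\kappa$ onto $\mu$, i.e. a function from the ordinal $\nu<\kappa$ into the ordinals, which by the previous paragraph already belongs to $\b V$ — contradicting that $\mu$ is a cardinal there. Hence every cardinal $\leq\kappa$ is preserved. For cofinalities, fix an ordinal $\gamma$ with $\cf^{\b V}(\gamma)\leq\kappa$ and set $\mu'=\cf^{\b V[G]}(\gamma)$; since a cofinal map in $\b V$ stays cofinal, $\mu'\leq\cf^{\b V}(\gamma)$. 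If $\mu'<\kappa$, then a cofinal map $\mu'\to\gamma$ in $\b V[G]$ is a short sequence of ordinals, hence lies in $\b V$, so $\cf^{\b V}(\gamma)\leq\mu'$ and thus $\mu'=\cf^{\b V}(\gamma)$; and if $\mu'=\kappa$, then $\cf^{\b V}(\gamma)$ cannot be $<\kappa$ (else the previous case would give $\mu'<\kappa$), so again $\cf^{\b V}(\gamma)=\kappa=\mu'$.

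There is essentially no genuine obstacle: the whole content of the corollary is \Cref{closure}, and the only point demanding a modicum of care is the bookkeeping at $\mu=\kappa$ and at $\cf(\gamma)=\kappa$, handled above by noting that a surjection or cofinal map of length $<\kappa$ would have had to be added as a new short sequence and hence cannot exist.
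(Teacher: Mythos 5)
Your proof is correct and is exactly the route the paper intends: the corollary is stated without proof precisely because it is the standard consequence of \Cref{closure}, namely that a ${<}\kappa$-closed forcing adds no new functions from ordinals $<\kappa$ into the ordinals, which is what you verify. The density/decision argument and the case analysis for cofinalities are both fine.
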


What is left, is to show that $\kappa^+$ is also preserved. This will be a consequence of showing that $\forcing{h}$ has the $F$-Sacks property for some suitably large $F\in\gbs$: if $\dot f$ is a $\forcing{h}$-name and $T\fc\ap{\dot f:\check\kappa\to \check\kappa^+}$ for some $T\in\forcing{h}$, then using (the proof of) the $F$-Sacks property we may produce sets $A_\xi$ with $|A_\xi|=F(\xi)$ for each $\xi<\kappa$ such that $T'\fc\ap{\dot f(\check\xi)\in \check A_\xi}$ for some stronger $T'\leq T$, and thus $\dot f$ is forced to have a range contained in $\Cup_\xi A_\xi$ and cannot be cofinal in $\kappa^+$.

Before we can prove that $\forcing{h}$ indeed has the $F$-Sacks property for some suitably large $F$, we will need to show that $\forcing{h}$ is closed under fusion. It will be helpful to establish some notation for subtrees. If $u\in T$, let $T_u=\st{v\in T\mid u\subset v\lo v\subset u}$. It is clear that $T_u\leq T$. 

Let a tree $T\in \forcing{h}$ be called \emph{sharp} if every $u\in\Split_\alpha(T)$ is a sharp $h(\alpha)$-splitting node. It is clear that by pruning we may find a sharp $T^*$ below any condition $T\in\forcing{h}$ such that $\Split_\alpha(T^*)\subset \Split_\alpha(T)$ for every $\alpha<\kappa$, and we may assume that we can canonically do so, thus we will hereby fix the notation $T^*$ to denote a sharp tree below condition $T$. We will write $(\forcing{h})^*=\st{T\in \forcing{h}\mid T\text{ is sharp}}$, which embeds densely into $\forcing{h}$. 

For $T,S\in\forcing{h}$, we let $T\leq_\alpha S$ iff $T\leq S$ and $\Split_\alpha(T)=\Split_\alpha(S)$. A {\bf fusion sequence} is a sequence $\ab{T_\alpha\in\forcing{h}\mid\alpha<\kappa}$ such that $T_{\beta}\leq_\alpha T_\alpha$ for all $\alpha\leq\beta<\kappa$.

\begin{lmm}
If $\ab{T_\alpha\in\forcing{h}\mid \alpha<\kappa}$ is a fusion sequence, then $T=\Cap T_\alpha\in \forcing{h}$.
\end{lmm}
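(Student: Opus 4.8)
The plan is to show that $T = \bigcap_{\alpha<\kappa} T_\alpha$ satisfies conditions (i), (ii) and (iii) of \Cref{forcing definition}, using the defining property of a fusion sequence that $T_\beta \leq_\alpha T_\alpha$ for $\alpha \leq \beta$, which in particular means the splitting nodes of level $<\alpha$ (in the $\Split$-enumeration sense) have stabilised by stage $\alpha$. First I would record the key stabilisation fact: if $u \in \Split_\alpha(T_\alpha)$, then $u \in \Split_\alpha(T_\beta)$ and $\suc(u, T_\beta) = \suc(u, T_\alpha)$ for all $\beta \geq \alpha$, so $u \in T$ and $\suc(u,T) = \suc(u,T_\alpha)$; more generally every node below or at such a $u$ is frozen. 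From this one gets that for each $\alpha$, $\Split_\alpha(T) = \Split_\alpha(T_\alpha)$, i.e. the splitting structure of $T$ is exactly the ``diagonal'' of the sequence, and the set of splitting nodes of $T$ is the increasing union of the $\Split_\alpha(T_\alpha)$.

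For (ii): given $u \in \Split_\alpha(T)$, by the stabilisation fact $u \in \Split_\alpha(T_\alpha)$ and $\suc(u,T) = \suc(u,T_\alpha)$, and since $T_\alpha$ satisfies (ii) this successor set has size (at least) $h(\alpha)$, giving (ii) for $T$. For (iii): if $\ab{u_\gamma \mid \gamma < \delta}$ is an increasing sequence of splitting nodes of $T$, then each $u_\gamma$ is splitting in every $T_\alpha$ (a node splitting in $T \subset T_\alpha$ that lies in $T_\alpha$ is splitting there, using that $T \leq T_\alpha$ means successor sets only shrink, and stabilisation pins down the relevant levels), so by (iii) applied in each $T_\alpha$ the union $u = \bigcup_\gamma u_\gamma$ is splitting in every $T_\alpha$, hence $u \in T$ and is splitting in $T$; one should check $u$ lands at the appropriate $\Split_\beta$ level, which follows because the splitting nodes strictly below $u$ form a set whose order type is the same in $T$ as in the $T_\alpha$ by stabilisation.

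The main obstacle, and the step deserving the most care, is condition (i): showing that every $u \in T$ extends to a splitting node of $T$. Fix $u \in T$. The naive approach — pick a splitting node $v_0 \supset u$ in $T_0$, then a splitting node $v_1 \supset v_0$ in $T_1$, etc. — produces a sequence of nodes but does not obviously converge, since at limit stages the union of these nodes need not be in $T$ unless we have arranged the $v_\gamma$ to be splitting in a uniform way. The fix is to use the closure/club structure: I would argue that the set of $\alpha < \kappa$ such that $u$ has an extension in $\Split_\alpha(T_\alpha)$ is unbounded (indeed club) in $\kappa$, using condition (i) in each $T_\alpha$ together with the fact that each $T_\alpha$ has at most $\kappa$-many nodes below any fixed length and $\kappa$ is regular; then take any such $\alpha$ and any $v \in \Split_\alpha(T_\alpha)$ with $u \subset v$ — by stabilisation $v \in T$ and $v$ is splitting in $T$. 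Alternatively, and perhaps more cleanly, one builds an increasing continuous sequence $\ab{v_\gamma \mid \gamma < \kappa}$ of nodes with $v_\gamma$ chosen at stage $\gamma$ to be a splitting node of $T_\gamma$ extending $\bigcup_{\gamma' < \gamma} v_{\gamma'}$ (possible by (i) for $T_\gamma$), taking unions at limits (legitimate by (iii) in $T_\gamma$, which makes the partial unions splitting and hence extendable, and because the $v_{\gamma'}$ for $\gamma' < \gamma$ were splitting in $T_\gamma$ by stabilisation); by regularity of $\kappa$ some $v_\gamma$ already has $\ot(v_\gamma) > \ot(u)$, and a tail of these $v_\gamma$ are frozen, landing in $T$ as genuine splitting nodes extending $u$. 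Either way the essential point is marrying the diagonal stabilisation of the fusion sequence with the club-ness of splitting levels guaranteed by (i) and (iii).
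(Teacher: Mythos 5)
Your overall strategy is the paper's: freeze the splitting levels diagonally along the fusion sequence, so that $\Split_\alpha(T_\beta)=\Split_\alpha(T_\alpha)$ for all $\beta\geq\alpha$, and use the clubness of splitting nodes in each $T_\alpha$ (conditions (i) and (iii)) to find above any $u\in T$ of length $\alpha$ a node of $\Split_\alpha(T_\alpha)$ that survives into $T$. Your handling of (i) and (iii) is essentially the argument in the paper.

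The one step that would fail as literally stated is your ``key stabilisation fact'': from $u\in\Split_\alpha(T_\alpha)$ you conclude $\suc(u,T_\beta)=\suc(u,T_\alpha)$ for all $\beta\geq\alpha$, hence $\suc(u,T)=\suc(u,T_\alpha)$. But $T_\beta\leq_\alpha T_\alpha$ only asserts that $\Split_\alpha(T_\beta)=\Split_\alpha(T_\alpha)$ \emph{as sets of nodes}; it does not freeze successor sets. For example $T_{\alpha+1}$ may prune $\suc(u,T_\alpha)$ from $h(\alpha)^+$ many down to $h(\alpha)$ many successors while keeping $u$ on the $\alpha$-th splitting level --- this is permitted by the ordering of \Cref{forcing definition}, since the cardinality strictly drops. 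So your argument for (ii), and every place where you pass from ``splitting in each $T_\beta$'' to ``splitting in $T$'' (which you also need in (i) and (iii)), requires a patch. Two routine repairs are available: (a) the sequence $\ab{\suc(u,T_\beta)\mid\beta<\kappa}$ is $\subseteq$-decreasing and every proper decrease strictly drops the cardinality, so by well-foundedness it is eventually constant; its stable value equals $\suc(u,T)$ and has size at least $h(\alpha)$ by condition (ii) in the $T_\beta$ --- this is exactly the Claim in the proof of \Cref{closure}; or (b) as the paper does, note that above each of the $\geq h(\alpha)$ successors of $u$ in $T_{\alpha+1}$ sits a node of $\Split_{\alpha+1}(T_{\alpha+1})$, and these are frozen for all $\beta\geq\alpha+1$ and hence lie in $T$, witnessing that $u$ is $h(\alpha)$-splitting in $T$ without any claim about what $\suc(u,T)$ is exactly. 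With either correction your proof is complete and coincides with the paper's.
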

\begin{proof}
Clearly $T$ is a tree on ${}^{<\kappa}\kappa$. We check conditions (i), (ii) and (iii) of \Cref{forcing definition}.

(i)\quad Let $u\in T$ and $\alpha=\ot(u)$, then for any $\beta,\beta'\geq\alpha$ we see that $\Split_\alpha(T_\beta)=\Split_\alpha(T_{\beta'})$, thus pick $\beta>\alpha$ arbitrarily. Since $\alpha=\ot(u)$, necessarily there exists some $v\in \Split_\alpha(T_\beta)$ such that $u\subset v$, and since $\beta>\alpha$ we see that $v\in\Split_\alpha(T)$.

(ii)\quad Let $s\in\Split_\alpha(T)$, then $s$ is $h(\alpha)$-splitting in $T_{\alpha+1}$. Let $\lambda_s=|\suc(s,T_{\alpha+1})|\geq h(\alpha)$ and let $\ab{t_\xi\mid \xi<\lambda_s}$ enumerate those $t\supset s$ such that $t\in\Split_{\alpha+1}(T_{\alpha+1})$. For all $\beta,\beta'\geq \alpha+1$ we have $\Split_{\alpha+1}(T_\beta)=\Split_{\alpha+1}(T_{\beta'})$, therefore for each $\xi<\lambda_s$ we see that $t_\xi\in T_\beta$ for all $\beta>\alpha$, thus $t_\xi\in T$. Therefore $s$ is $h(\alpha)$-splitting in $T$.

(iii)\quad Let $\gamma$ be limit and let $\ab{u_\alpha\in\Split_\alpha(T)\mid \alpha<\gamma}$ such that $u_\alpha\subset u_{\alpha'}$ for each $\alpha<\alpha'$, then $u_\alpha\in\Split_\alpha(T_\gamma)$ and thus $u=\Cup u_\alpha$ is an $h(\gamma)$-splitting node in $T_\gamma$, hence $u\in \Split_\gamma(T_\gamma)$. It follows that $u\in\Split_\gamma(T_\beta)$ for all $\beta\geq\gamma$, thus $u\in\Split_\gamma(T)$.
\end{proof}

We are now ready to prove the two main ingredients necessary for separating the localisation cardinals. We will show that for any $h$ there is some faster growing  $F$ such that $\forcing{h}$ has the $F$-Sacks property, and reversely for any $F$ there exists some faster growing  $h$ such that $\forcing{h}$ does not have the $F$-Sacks property. In other words, for any $F_0$ we may find $h$ and $F_1$ such that $\forcing{h}$ does not have the $F_0$-Sacks property, but does have the $F_1$-Sacks property.

\begin{thm}\label{having Sacks}
For any $h$ there exists $F$ such that $h\leq F$ and $\bb S^{h}_\kappa$ has the  $F$-Sacks property.
\end{thm}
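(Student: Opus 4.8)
The plan is to extract a witness $F$ from a fusion argument that simultaneously handles all $\kappa$-many values of the name $\dot f$. The natural choice of $F$ should grow fast enough to absorb the combinatorial cost of deciding $\dot f(\check\alpha)$ while only shrinking splitting at stages $\geq\alpha$; concretely I expect to set $F(\alpha)$ to be something like $\bigl|{}^{\leq\alpha}(h(\alpha))\bigr|$ or a similarly closed-off cardinal dominating the number of ``threads'' through $\Split_{<\alpha}(T)$, and then verify $F\in\gbs$ (increasing, cofinal, cardinal-valued) using inaccessibility of $\kappa$. Since $(\forcing h)^*$ is dense, I would work with a sharp condition $T\leq p$ throughout, so that the splitting structure is rigid and $|\suc(u,T)|=h(\alpha)$ exactly when $u\in\Split_\alpha(T)$.

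The core is a standard Sacks-style fusion. Fix $T_0=T^*$ sharp. Given $T_\alpha$, I would enumerate $\Split_\alpha(T_\alpha)=\st{u^\alpha_i\mid i<\mu_\alpha}$ (where $\mu_\alpha\leq\bigl|{}^{\alpha}\kappa\bigr|<\kappa$ by inaccessibility, and below each $u^\alpha_i$ for each immediate successor $s\in\suc(u^\alpha_i,T_\alpha)$ pass to a condition deciding $\dot f(\check\alpha)$: work inside $(T_\alpha)_s$, find $q\leq (T_\alpha)_s$ and an ordinal deciding $\dot f(\check\alpha)$, then (using sharpness and ${<}\kappa$-closure) amalgamate these $<\kappa$-many pieces back into a single $T_{\alpha+1}\leq_\alpha T_\alpha$ in which, below every node of $\Split_{\alpha+1}(T_{\alpha+1})$, the value $\dot f(\check\alpha)$ is already decided. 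At limits $\gamma$, the fusion lemma just proved gives $T_\gamma=\Cap_{\alpha<\gamma}T_\alpha\in\forcing h$ with $T_\gamma\leq_\alpha T_\alpha$ for all $\alpha<\gamma$. Let $T_\infty=\Cap_{\alpha<\kappa}T_\alpha$; again by the fusion lemma $T_\infty\in\forcing h$ and $T_\infty\leq p$.

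Now I would read off the slalom. For each $\alpha<\kappa$, every branch of $T_\infty$ passes through some node of $\Split_{\alpha+1}(T_\infty)\subset\Split_{\alpha+1}(T_{\alpha+1})$, and below each such node $\dot f(\check\alpha)$ has been decided to a fixed ordinal; collecting these decided values gives a set $A_\alpha$ with $|A_\alpha|\leq\bigl|\Split_{\alpha+1}(T_{\alpha+1})\bigr|$. The key bookkeeping is that $\bigl|\Split_{\alpha+1}(T_{\alpha+1})\bigr|$ is bounded by a cardinal depending only on $h$ and $\alpha$ (roughly $\prod_{\beta\le\alpha}h(\beta)$, which is ${<}\kappa$ since $\kappa$ is inaccessible and $h\in\gbs$); choosing $F(\alpha)$ to dominate this bound — and to be increasing and cofinal, e.g. by taking $F(\alpha)=\bigl(\prod_{\beta\leq\alpha}h(\beta)\bigr)^{+}\cup h(\alpha)$ or simply the least cardinal $\geq$ that product and $\geq\alpha$ — yields $|A_\alpha|\leq F(\alpha)$, so $\alpha\mapsto A_\alpha$ is an $F$-slalom, and $T_\infty\fc\ap{\dot f(\check\alpha)\in\check A_\alpha}$ for all $\alpha$.

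The main obstacle, as usual in generalised Sacks arguments, is the amalgamation at successor steps: one must splice together $<\kappa$-many conditions $q_{u,s}\leq(T_\alpha)_s$ into a single condition that still satisfies clauses (ii) and (iii) of \Cref{forcing definition} and lies ${\leq_\alpha}$-below $T_\alpha$ — in particular one cannot afford to lose the requirement that $\Split_\beta$-nodes are (sharp) $h(\beta)$-splitting, so the decisions must be arranged not to thin out successor-sets below the $h(\beta)$ threshold. Here sharpness is what saves us: passing to $q_{u,s}$ may shrink $\suc$-sets, but re-sharpening and using that $h$ is a cardinal function lets us restore exact $h(\beta)$-splitting at the relevant levels, and ${<}\kappa$-closure guarantees the amalgam is a genuine condition. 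The limit steps are handled verbatim by the fusion lemma, and the cardinal arithmetic for $F\in\gbs$ is routine given inaccessibility.
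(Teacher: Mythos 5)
Your skeleton is the paper's: build a fusion sequence, below each successor of each $\alpha$-th splitting node pass to a condition deciding $\dot f(\check\alpha)$, collect the decided values into $A_\alpha$, and take $F(\alpha)$ large enough to dominate $|\r{Split}_\alpha(T_\alpha)|\cdot h(\alpha)$ (the paper uses $F(\alpha)=h(\alpha)^{|\alpha|}$, which is essentially your $\prod_{\beta\leq\alpha}h(\beta)$). Your use of sharpness is also the right one, namely to make that count come out to something ${<}\kappa$ depending only on $h$ and $\alpha$ --- though note the slip that $|{}^{\alpha}\kappa|=\kappa^{|\alpha|}=\kappa$, not ${<}\kappa$; the bound on $\mu_\alpha$ really does need sharpness (the paper's property $(*)$), as you say later. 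The genuine gap is in the amalgamation step, and the remedy you propose there does not work. When you pass to $q_{u,s}\leq(T_\alpha)_s$ deciding $\dot f(\check\alpha)$, membership of $q_{u,s}$ in $\forcing h$ only guarantees that its own $\beta$-th splitting level is $h(\beta)$-splitting; but after gluing, those nodes sit at splitting level $\alpha+1+\beta$ of $T_{\alpha+1}$, where clause (ii) of \Cref{forcing definition} demands $h(\alpha+1+\beta)$-splitting. ``Re-sharpening'' cannot repair this: sharpening is pruning, and pruning only deletes successors --- it can never restore the ones that the step down to $q_{u,s}$ already discarded. Nothing about $h$ being cardinal-valued helps here either.

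The paper's fix is a level \emph{realignment} rather than a restoration: inside each deciding condition $T^v\leq(T_\xi^*)_v$ one picks some $u\in\r{Split}_\xi(T^v)$ and $w\in\suc(u,T^v)$ and glues in only the cone $T^v_w$, deliberately throwing away the first $\xi+1$ splitting levels of $T^v$. The $\beta$-th splitting level of $T^v_w$ is then the $(\xi+1+\beta)$-th splitting level of $T^v$, hence already $h(\xi+1+\beta)$-splitting --- which is exactly what clause (ii) asks of it at its new position in $T_{\xi+1}$. This costs nothing in the bookkeeping: $T^v_w$ still forces $\dot f(\check\xi)=\check\beta^v_\xi$, the family $\st{T^v_w\mid v\in V_\xi}$ is predense below $T_{\xi+1}$, and one value per $v\in V_\xi$ still gives $|A_\xi|\leq|\r{Split}_\xi(T_\xi^*)|\cdot h(\xi)\leq h(\xi)^{|\xi|}=F(\xi)$. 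With that one device inserted, the rest of your argument (limit stages via the fusion lemma, reading off the slalom from $T_\infty$, and the routine verification that $F\in\gbs$) goes through as you describe.
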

\begin{proof}
We will let $F:\alpha\mapsto h(\alpha)^{|\alpha|}$ and show that $\forcing{h}$ has the $F$-Sacks property.

Let $T_0\in\forcing{h}$ and let $\dot f$ be a name such that $T_0\fc\ap{\dot f:\check\kappa\to\check\kappa}$. If $T_0\fc \ap{\dot f\in\check{\b V}}$, then the existence of an appropriate $F$-slalom is obvious, so we assume that $T_0\fc\ap{\dot f\notin \check{\b V}}$. We will construct a fusion sequence $\ab{T_\xi\mid \xi<\kappa}$ and a family of sets $\st{B_\xi\subset \kappa\mid \xi<\kappa}$ with $|B_\xi|\leq F(\xi)$ such that  $\Cap T_\xi=T\fc\ap{\dot f(\check\xi)\in \check B_\xi}$ for each $\xi<\kappa$. Consequently, we can define the $F$-slalom $\phi:\xi\mapsto B_\xi$ to see that $T\fc\ap{\dot f(\check\xi)\in \check\phi(\check\xi)}$ for all $\xi<\kappa$.

In general, we will assume each $T_\xi$ has the following property:
\begin{align*}
(*)\quad\text{for every }u\in\Split_\alpha(T_\xi)\text{ with }\alpha<\xi\text{ we have }|\suc(u,T_\xi)|=h(\alpha).
\end{align*}
This is vacuously true for $T_0$, and by using sharp trees at successor stages of our construction, $(*)$ will follow by induction. If $\gamma$ is limit, we will let $T_\gamma=\Cap_{\xi<\gamma}T_\xi$, which will be a condition by the proof of \Cref{closure}. $T_\gamma$ need not necessarily be a sharp tree, but it is at least sharp for all splitting levels less than $\gamma$, which is enough for $(*)$.

Suppose $T_\xi$ has been defined, then we will define $T_{\xi+1}$ that limits the possible values of $\dot f(\xi)$ and such that $T_{\xi+1}\leq_\xi T_\xi$. First note that if $T_\xi$ has property $(*)$, then $T_\xi^*\leq_\xi T_\xi$: If $u$ is splitting in $T_\xi$ and $u\notin T_\xi^*$, then $u$ was removed because there is some $v\subset u$ such that $\suc(v,T_\xi)$ is too large for sharpness. But then by $(*)$ it follows that $v\in\Split_\alpha(T_\xi)$ for some $\alpha\geq \xi$, hence $u\in\Split_\beta(T_\xi)$ for some $\beta>\xi$.

Let $V_\xi =\Cup\{\suc(u,T_\xi^*)\mid u\in\Split_\xi(T_\xi^*)\}$, and for each $v\in V_\xi$ find a condition $T^v\leq (T_\xi^*)_v$ such that $T^v\fc\ap{\dot f(\check\xi)=\check\beta_\xi^v}$ for some $\beta_\xi^v<\kappa$. Let $u\in\Split_\xi(T^v  )$ and $w\in\suc(u,T^v  )$ be arbitrary and consider the subtree  $T^v_{w}$ of $T^v  $ generated by the initial segment $w$. We let $G_\xi:V_\xi \to \c P(T_\xi)$ send $v\mapsto T^v_w$. Note that the $\alpha$-th splitting level of $G_\xi(v)=T^v_w$ corresponds to the $(\xi+1+\alpha)$-th splitting level of $T^v$. 

Now we define:
\begin{align*}
T_{\xi+1}&=\Cup G_\xi[V_\xi ]=\Cup \st{G_\xi(v)\mid v\in V_\xi },\\
B_{\xi}&=\st{\beta_\xi^v\mid v\in V_\xi }.
\end{align*}
For each $v\in V_\xi $ we have $v\in G_\xi(v)$, thus each successor of a splitting node in $\Split_\xi(T_\xi)$ is in $T_{\xi+1}$, therefore we see that $\Split_{\xi}(T_{\xi+1})=\Split_\xi(T_\xi)$. If $u\in\Split_{\xi+1+\alpha} (T_{\xi+1})$ for some $\alpha<\kappa$, then $u\in\Split_{\alpha}(G_\xi(v))$, thus $u\in \Split_{\xi+1+\alpha}(T^v)$, and since $T^v \in\forcing{h}$, we see that $u$ is $h(\xi+1+\alpha)$-splitting. Therefore $T_{\xi+1}$ satisfies (ii) of \Cref{forcing definition}. It is easy to check (i) and (iii), thus we can conclude that $T_{\xi+1}\in\forcing{h}$ and that $T_{\xi+1}\leq_\xi T_\xi$.

Note that $|B_\xi|\leq|V_\xi |=|\Split_\xi(T_{\xi}^*)|\cdot h(\xi)\leq h(\xi)^{|\xi|}=F(\xi)$, as intended. 

Now $T^v \fc \ap{\dot f(\check\xi)\in \check B_\xi}$, and $\st{T^v \mid v\in V_\xi}$ is predense below $T_{\xi+1}$, thus $T_{\xi+1}\fc\ap{\dot f(\check\xi)\in \check B_\xi}$. Let $T=\Cap_{\xi<\kappa}T_\xi$, then by the fusion lemma $T\in\forcing{h}$, and $T\fc\ap{\dot f(\check \xi)\in \check B_\xi}$ for all $\xi<\kappa$.
\end{proof}

\begin{crl}
$\forcing{h}$ preserves $\kappa^+$.\renewcommand{\qedsymbol}{$\square$}\qedhere
\end{crl}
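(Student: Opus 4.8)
The plan is to invoke the $F$-Sacks property of \Cref{having Sacks} to bound the range of any $\forcing h$-name for a function from $\kappa$ into $\kappa^+$. Since the preceding corollary already gives that $\kappa$ is preserved, it suffices to rule out a cofinal map $\check\kappa\to\check\kappa^+$: were $\kappa^+$ collapsed, it would acquire cardinality $\kappa$ in the extension (it contains $\kappa$, which stays a cardinal), hence there would be a surjection, and in particular a cofinal map, from $\kappa$ onto it. So suppose toward a contradiction that $T_0\in\forcing h$ and $\dot f$ is a name with $T_0\fc\ap{\dot f:\check\kappa\to\check\kappa^+\text{ is cofinal}}$.

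First I would rerun the fusion construction from the proof of \Cref{having Sacks} verbatim. Nothing in that construction uses that the values of $\dot f$ lie below $\kappa$: at stage $\xi+1$ one only needs, for each $v\in V_\xi$, a condition $T^v\leq(T_\xi^*)_v$ deciding $\dot f(\check\xi)$ to equal some ordinal $\beta_\xi^v<\kappa^+$, which exists by density, and then sets $B_\xi=\st{\beta_\xi^v\mid v\in V_\xi}$ and $T_{\xi+1}=\Cup G_\xi[V_\xi]$ exactly as before, with the analogous remarks about sharp trees and property $(*)$. The cardinality bound is unchanged: $\card{B_\xi}\leq\card{V_\xi}=\card{\Split_\xi(T_\xi^*)}\cdot h(\xi)\leq h(\xi)^{\card\xi}<\kappa$, the last inequality holding because $\kappa$ is inaccessible and $h(\xi),\card\xi<\kappa$. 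Taking $T=\Cap_{\xi<\kappa}T_\xi$, the fusion lemma gives $T\in\forcing h$ and $T\fc\ap{\dot f(\check\xi)\in\check B_\xi}$ for every $\xi<\kappa$.

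Finally I would set $B=\Cup_{\xi<\kappa}B_\xi$. Since $\kappa$ is regular and each $\card{B_\xi}<\kappa$, we have $\card B\leq\kappa$, so $\sup B<\kappa^+$; and $T\fc\ap{\ran(\dot f)\subset\check B}$, so $T$ forces $\dot f$ to be bounded in $\kappa^+$. This contradicts $T\leq T_0$ together with $T_0\fc\ap{\dot f\text{ is cofinal}}$. Hence no such $\dot f$ exists and $\kappa^+$ is preserved.

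The only step deserving a second look is the claim that the proof of \Cref{having Sacks} transfers without change to codomain $\kappa^+$; but the sole place the codomain could enter is in estimating $\card{B_\xi}$, and that estimate merely counts the branches $v\in V_\xi$ through the splitting tree of $T_\xi^*$ up to level $\xi$, which is insensitive to which ordinals the $\beta_\xi^v$ happen to be. So I do not anticipate a genuine obstacle here — this corollary is a routine adaptation of the theorem just proved, as already hinted in the paragraph preceding \Cref{having Sacks}.
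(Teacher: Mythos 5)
Your argument is correct and is exactly the one the paper intends: the paragraph preceding \Cref{having Sacks} sketches precisely this use of the fusion construction with codomain $\kappa^+$ to bound the range of $\dot f$ in a set of size $\kappa$, so that no name can be forced to be cofinal in $\kappa^+$. Your observation that the codomain only enters through the (unchanged) cardinality estimate on $B_\xi$ is the right justification, so the corollary stands as you argue it.
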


Before we prove the existence of $F$ such that $\forcing h$ does not have the $F$-Sacks property, we will state a lemma that will be useful later on as well.

\begin{lmm}\label{splitting nodes club}
Let $T\in \forcing h$ and let $C_T=\st{\alpha<\kappa\mid \Split_\alpha(T)=T\cap {}^\alpha\kappa}$, then $C_T$ is a club set.
\end{lmm}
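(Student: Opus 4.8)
The plan is to check separately that $C_T$ is closed in $\kappa$ and that it is unbounded. Both parts are really about keeping track of the order type of the set of splitting nodes below a given node, together with condition (iii) of \Cref{forcing definition}. For closedness, let $D\subseteq C_T$ be nonempty with $\alpha=\sup D<\kappa$; we may assume $\alpha$ is a limit of $D$. To see $T\cap{}^\alpha\kappa\subseteq\Split_\alpha(T)$, take $u$ of length $\alpha$: for each $\delta\in D$ we have $u\restriction\delta\in\Split_\delta(T)=T\cap{}^\delta\kappa$, so $u\restriction\delta$ is splitting and the splitting nodes strictly below it have order type $\delta$; since $D$ is cofinal in $\alpha$, $u=\Cup_{\delta\in D}u\restriction\delta$ is an increasing union of splitting nodes, hence splitting by (iii), and the splitting nodes below $u$ have order type $\sup_{\delta\in D}\delta=\alpha$, so $u\in\Split_\alpha(T)$. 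For the reverse inclusion, if $w\in\Split_\alpha(T)$ has length $\gamma<\alpha$ then the splitting nodes below $w$ have order type $<\alpha$, impossible; and if $\gamma>\alpha$ then $w\restriction\alpha\in T\cap{}^\alpha\kappa\subseteq\Split_\alpha(T)$ by the previous step, forcing the splitting nodes below $w$ to have order type $\geq\alpha+1$, again impossible. So $\gamma=\alpha$, and $\alpha\in C_T$.

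For unboundedness the key auxiliary fact I would establish first is that for every $\beta<\kappa$ the set $\Cup_{\delta\leq\beta}\Split_\delta(T)$ has bounded height in $T$; write $R(\beta)<\kappa$ for a strict upper bound on the lengths of its nodes. This is an induction on $\beta$: at a successor step every $w\in\Split_{\beta+1}(T)$ is the least splitting node above some $v^\frown\xi$ with $v\in\Split_\beta(T)$ the largest splitting node below $w$ and $\xi\in\suc(v,T)$, and there are fewer than $\kappa$ such pairs (the levels of $T$ below height $R(\beta)$ have size $<\kappa$, and each splitting node has $<\kappa$ successors), so the first splits above them have lengths with supremum $<\kappa$; at a limit step a node of $\Split_\lambda(T)$ is a union of splitting nodes at levels below $\sup_{\delta<\lambda}R(\delta)<\kappa$. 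Given $R$, let $E$ be the club of limit ordinals $\alpha<\kappa$ that are closed under $\beta\mapsto\sup_{\delta<\beta}R(\delta)$ (so in particular $R(\delta)<\alpha$ for all $\delta<\alpha$); I claim $E\subseteq C_T$, which suffices.

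Fix $\alpha\in E$ and $u\in T\cap{}^\alpha\kappa$, and let $S_u=\st{\beta<\alpha\mid u\restriction\beta\text{ is splitting}}$. If $S_u$ were bounded in $\alpha$ it would (using (iii)) have a maximum $\beta^*$, with $u\restriction\beta^*\in\Split_\rho(T)$ for some $\rho\leq\beta^*<\alpha$; then the least splitting node above the appropriate successor of $u\restriction\beta^*$ lies in $\Split_{\rho+1}(T)$ and has length $\geq\alpha$, yet $R(\rho+1)<\alpha$, contradicting the definition of $R$. Hence $S_u$ is cofinal in $\alpha$, so $\ot(S_u)$ is a limit ordinal $\mu\leq\alpha$; and if $\mu<\alpha$, enumerating $S_u=\st{\sigma_\nu\mid\nu<\mu}$ increasingly we get $u\restriction\sigma_\nu\in\Split_\nu(T)$, hence $\sigma_\nu<R(\nu)$, so $\alpha=\sup_{\nu<\mu}\sigma_\nu\leq\sup_{\nu<\mu}R(\nu)<\alpha$ since $\alpha\in E$ — a contradiction. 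So $\ot(S_u)=\alpha$; then $u$ is splitting (it is the union of the splitting nodes $u\restriction\beta$ for $\beta\in S_u$, which are cofinal in $\alpha$) and lies in $\Split_\alpha(T)$. This gives $T\cap{}^\alpha\kappa\subseteq\Split_\alpha(T)$, and the reverse inclusion follows exactly as in the closedness argument, so $\alpha\in C_T$. The bookkeeping with splitting levels is routine; the one genuine obstacle is the height bound $R(\beta)<\kappa$, whose proof is where the boundedness of the levels and branching of conditions in $\forcing h$ is used (and without it the statement would be false).
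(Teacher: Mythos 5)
Your proof is, at bottom, the same as the paper's: the paper's (very terse) argument chooses $\alpha_{n+1}$ bounding the heights of the nodes in $\Split_{\alpha_n}(T)$ and passes to the $\omega$-limit, i.e.\ it works at a closure point of exactly your function $R$; the verification that such closure points lie in $C_T$ (cofinality of the splitting levels along any node of length $\alpha$, plus the order-type computation) is what the paper leaves implicit, and you carry it out correctly, as you do the closedness part. One small omission: in the unboundedness argument you should also rule out $S_u=\emp$ before taking its maximum (e.g.\ the stem of $T$ is an initial segment of every node of length at least its own, and its length is below $R(0)<\alpha$).

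The step to flag is precisely the one you identify as the crux, the bound $R(\beta)<\kappa$. You justify its successor step by ``the levels of $T$ have size $<\kappa$ and each splitting node has $<\kappa$ successors'', but neither is guaranteed by \Cref{forcing definition}: clause (ii) only requires $h(\alpha)\leq\card{\suc(u,T)}$, a lower bound, so a condition may contain $\kappa$-splitting nodes (sharpness is an extra property, used in the paper exactly because conditions need not have it). The bound can then genuinely fail: take the tree whose root has all $\kappa$ successors $\ab{\gamma}$, above the $\gamma$-th successor a chain of non-splitting nodes of length $\gamma$ before the next splitting node, and above that an everywhere-splitting tree with sufficiently large splitting; this satisfies (i)--(iii), yet $\Split_1(T)$ has unbounded heights and $C_T$ is bounded. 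So, as you yourself suspect, the height bound is where the real content sits, and your justification of it only works on the subfamily of conditions all of whose splitting nodes have fewer than $\kappa$ successors (for instance sharp conditions), where your two $<\kappa$ claims do follow from inaccessibility and regularity of $\kappa$. In fairness, the paper's own proof hides the same assumption in the phrase ``$\alpha_{n+1}$ large enough such that $\Split_{\alpha_n}(T)\subset{}^{\leq\alpha_{n+1}}\kappa$'', and in the two applications of the lemma one may first pass to the dense set of sharp conditions, so nothing downstream breaks; but as written, for arbitrary $T\in\forcing{h}$, your argument (like the paper's) needs this extra hypothesis made explicit.
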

\begin{proof}
For $\alpha_0\in\kappa$ we can recursively define $\alpha_{n+1}$ large  enough such that $\Split_{\alpha_n}(T)\subset {}^{\leq \alpha_{n+1}}\kappa$ for each $n\in\omega$. Let $\alpha=\Cup_{n\in\omega}\alpha_n$, then $\alpha\in C_T$, hence $C_T$ is unbounded. It is easy to see that $C_T$ is continuous.
\end{proof}

\begin{thm}\label{not having Sacks} 
Let $F\in{}^\kappa\kappa$, then there exists $h$ such that $\bb S^{h}_\kappa$ does not have the $F$-Sacks property.
\end{thm}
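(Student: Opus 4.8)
The plan is to choose $h$ growing strictly faster than $F$ at every coordinate and then show that the generic branch of $\forcing h$ cannot be localised by any $F$-slalom. First I would define $h\in\gbs$ by recursion, letting $h(\alpha)$ be the least cardinal strictly above $|\alpha|$, above $|F(\alpha)|$, and above $\sup_{\beta<\alpha}h(\beta)$. Since $\kappa$ is inaccessible, each of these bounds is ${<}\kappa$, so $h(\alpha)<\kappa$; the sequence is strictly increasing, hence $\ran(h)$ is cofinal in $\kappa$, and each $h(\alpha)$ is a cardinal, so $h$ meets the standing conventions, $\forcing h$ is a legitimate forcing, and $|F(\alpha)|<h(\alpha)$ for every $\alpha<\kappa$.

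Next I would set up the name $\dot g$ for the generic branch: the canonical $\forcing h$-name with $T\fc\ap{\dot g\in[\check T]}$ for every condition $T$. Since $[T]\subset\gbs$, this gives $\ft\fc\ap{\dot g:\check\kappa\to\check\kappa}$, and $T_u\fc\ap{\check u\subset\dot g}$ whenever $u\in T$. It then suffices to prove that there is no $F$-slalom $\phi$ and no $q\in\forcing h$ with $q\fc\ap{\dot g(\check\alpha)\in\check\phi(\check\alpha)}$ for all $\alpha<\kappa$, since this says precisely that $\dot g$ (with $p=\ft$) witnesses the failure of the $F$-Sacks property.

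To prove the latter, I would argue by contradiction: assume such $\phi$ and $q$ exist. The key point is that one must not evaluate $\dot g$ at an arbitrary level — an $\alpha$-th splitting node of $q$ can be much longer than $\alpha$, so localising there only bounds $|\phi(\ot(u))|$ below by $h(\alpha)$, which is useless. Instead I would use \Cref{splitting nodes club} to pick $\alpha\in C_q$, so that every length-$\alpha$ node of $q$ lies in $\Split_\alpha(q)$ and hence, by condition (ii) of \Cref{forcing definition}, is an $h(\alpha)$-splitting node. Fixing such a node $u$ (one exists since $q$ has branches of length $\kappa$), for each of the at least $h(\alpha)$ many $\beta$ with $u^\frown\beta\in q$ the condition $q_{u^\frown\beta}\leq q$ satisfies $q_{u^\frown\beta}\fc\ap{\dot g(\check\alpha)=\check\beta}$ and $q_{u^\frown\beta}\fc\ap{\dot g(\check\alpha)\in\check\phi(\check\alpha)}$, hence $q_{u^\frown\beta}\fc\ap{\check\beta\in\check\phi(\check\alpha)}$; as $\beta$ and $\phi(\alpha)$ lie in $\b V$ and $q_{u^\frown\beta}\neq\emp$, this forces a true statement, so $\beta\in\phi(\alpha)$. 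Thus $\phi(\alpha)$ contains all these $\beta$, giving $|\phi(\alpha)|\geq h(\alpha)>|F(\alpha)|$, which contradicts $\phi$ being an $F$-slalom.

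The main obstacle — and essentially the only non-routine ingredient — is exactly this observation that one has to localise at a closure point of the splitting structure: at an arbitrary level $h$ cannot be made to outrun $F$ composed with the (condition-dependent) growth of splitting-node lengths, and \Cref{splitting nodes club} is what removes that obstruction. Verifying that the recursively defined $h$ obeys the conventions, and recalling the standard properties of $\dot g$, are routine.
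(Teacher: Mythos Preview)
Your proof is correct and follows essentially the same route as the paper: use the generic branch as the witnessing name, invoke \Cref{splitting nodes club} to find a level $\alpha\in C_q$ where $\Split_\alpha(q)=q\cap{}^\alpha\kappa$, and then observe that the $h(\alpha)$-many successors of a node at that level cannot all fit into $\phi(\alpha)$. The only (inessential) difference is that the paper takes $h$ to dominate $F$ merely on a stationary set and intersects that set with $C_T$, whereas you make $h$ dominate $F$ everywhere; since the theorem only asks for the existence of some $h$, your stronger choice is perfectly adequate.
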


\begin{proof}
Let $h$ be such that $F(\alpha)<h(\alpha)$ for all $\alpha\in S$, where $S$ is a stationary subset of $\kappa$. We will show that $\bb S^h_\kappa$ does not have the $F$-Sacks property.

Let $\dot f$ be a name for the generic $\bb S^h_\kappa$-real in $\gbs$, let $\phi$ be an $F$-slalom, let $T\in\bb S^h_\kappa$ and let $\alpha_0<\kappa$. We want to find some $\alpha\geq\alpha_0$ and $S\leq T$ such that $S\fc\ap{\dot f(\check\alpha)\notin\check\phi(\check\alpha)}$. If we can find $u\in T\cap{}^{\alpha+1}\kappa$ such that $u(\alpha)\notin \phi(\alpha)$, then $T_u$ will be sufficient.

Let $C_T$ be as defined in \Cref{splitting nodes club} and $\alpha\in C_T\cap S$ such that $\alpha_0\leq\alpha$, then $\Split_\alpha(T)=T\cap {}^\alpha\kappa$, thus each $t\in T\cap {}^\alpha\kappa$ is an $h(\alpha)$-splitting node. Hence, there is a set $X\subset\kappa$ with $|X|=h(\alpha)$ such that $t^\frown\gamma\in T$ for all $\gamma\in X$. Since $|\phi(\alpha)|=F(\alpha)<h(\alpha)$, there is some $\gamma\in X$ such that $\gamma\notin \phi(\alpha)$, and thus $u=t^\frown\gamma$ is as desired.
\end{proof}

\section*{Products}

We see that for any $F_0$, we can find a faster growing $F_1$ and some suitable $h$ such that the forcing $\forcing{h}$ has the $F_1$-Sacks property and not the $F_0$-Sacks property, thus forcing with $\forcing{h}$ will not increase $\dstar{F_1}$, but has the potential to increase $\dstar{F_0}$. 

In order to increase $\dstar{F_0}$ we will need to add many $\forcing{h}$-generic $\kappa$-reals to the ground model. This can be either done with an iteration, or with a product. Iteration has the drawback that once we have forced $2^\kappa$ to be of size $\kappa^{++}$, the forcing $\forcing{h}$ no longer has the ${<}\kappa^{++}$-cc, and thus we cannot sufficiently control the iteration past this point. We will therefore focus on the product instead.

Let $A$ be a set of ordinals and $h_\xi$ for each $\xi\in A$. Let $\c C$ be the set of (choice) functions $p:A\to\bb \Cup_{\xi\in A}\forcing{h_\xi}$ such that $p(\xi)\in \forcing{h_\xi}$ for each $\xi\in A$. For any $p\in \c C$, we define the {\bf support} of $p$ as $\supp(p)=\st{\xi\in A\smid p(\xi)\neq\ft_{\forcing{h_\xi}}}$. We define the ${\leq}\kappa$-support product as follows: 
\begin{align*}
\textstyle\prod_{\xi\in A}\forcing{h_\xi}=\st{p\in\c C\smid \card{\supp(p)}\leq \kappa}.
\end{align*}
We will fix $A$, $\ab{h_\xi\mid \xi\in A}$ and $\c C$ and use the shorthand $\bar{\bb S}=\prod_{\xi\in A}\forcing{h_\xi}$, unless stated otherwise. If $p,q\in\bar{\bb S}$, then $q\leq_{\bar{\bb S}} p$ iff $q(\xi)\leq_{\forcing{h_\xi}}p(\xi)$ for all $\xi\in A$. We will again write $q\leq p$ instead of $q\leq_{\bar{\bb S}} q$ if the context is clear.

If $\ab{p_\alpha\mid \alpha<\gamma}$ is a sequence of conditions in $\bar{\bb S}$ such that $p_{\alpha'}\leq_{\bar{\bb S}} p_\alpha$ for all $\alpha\leq \alpha'$, then we let $\La_{\alpha<\gamma}p_\alpha:A\to \Cup_{\xi\in A}\forcing{h_\xi}$ be the function sending $\xi\mapsto \Cap_{\alpha<\gamma}p_\alpha(\xi)$. If we assume that $\gamma<\kappa$, then by the ${<}\kappa$-closure of each $\forcing{h_\xi}$ (\Cref{closure}) it follows that $\Cap_{\alpha<\gamma} p_\alpha(\xi)\in \forcing{h_\xi}$, and thus $\La_{\alpha<\gamma} p_\alpha\in \c C$. It is easy to see that $|\supp(\La_{\alpha<\gamma} p_\alpha)|\leq \kappa$ as well, thus we see that $\bar{\bb S}$ is ${<}\kappa$-closed:

\begin{lmm}\label{closure product}
$\bar{\bb S}$ is ${<}\kappa$-closed.\renewcommand{\qedsymbol}{$\square$}\qedhere
\end{lmm}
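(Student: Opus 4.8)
The plan is to verify directly that the candidate lower bound already named just before the statement — the coordinatewise meet $\La_{\alpha<\gamma}p_\alpha$ — is a condition of $\bar{\bb S}$ lying below every member of the chain. So I would fix an ordinal $\gamma<\kappa$ and a descending sequence $\ab{p_\alpha\mid\alpha<\gamma}$ in $\bar{\bb S}$, and set $p=\La_{\alpha<\gamma}p_\alpha$, that is, $p(\xi)=\Cap_{\alpha<\gamma}p_\alpha(\xi)$ for each $\xi\in A$.

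First I would check that $p\in\c C$. For each fixed $\xi\in A$ the sequence $\ab{p_\alpha(\xi)\mid\alpha<\gamma}$ is a descending chain in $\forcing{h_\xi}$ of length $\gamma<\kappa$, so by the ${<}\kappa$-closure of $\forcing{h_\xi}$ (\Cref{closure}) — more precisely by its proof, which exhibits the full intersection $\Cap_{\alpha<\gamma}p_\alpha(\xi)$ as a condition below all the $p_\alpha(\xi)$ — we get $p(\xi)=\Cap_{\alpha<\gamma}p_\alpha(\xi)\in\forcing{h_\xi}$. Hence $p$ is a genuine choice function.

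Next comes the support bound: $\supp(p)\subset\Cup_{\alpha<\gamma}\supp(p_\alpha)$, since a coordinate $\xi$ lying outside every $\supp(p_\alpha)$ has $p_\alpha(\xi)=\ft_{\forcing{h_\xi}}$ for all $\alpha$ and therefore $p(\xi)=\ft_{\forcing{h_\xi}}$. As $\kappa$ is regular and $\gamma<\kappa$, this is a union of fewer than $\kappa$ sets each of size $\leq\kappa$, so $|\supp(p)|\leq\kappa$ and thus $p\in\bar{\bb S}$. Finally, $p\leq_{\bar{\bb S}}p_\alpha$ for each $\alpha<\gamma$ is, by definition of the product order, exactly the coordinatewise assertion $p(\xi)\leq_{\forcing{h_\xi}}p_\alpha(\xi)$, which is what the proof of \Cref{closure} delivers for the intersection of a descending chain.

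I do not expect a real obstacle: the only point needing a little care is that one must invoke not merely the statement of \Cref{closure} but the explicit shape of the lower bound its proof produces (the full intersection), so that the coordinatewise meets assemble into a single condition below every $p_\alpha$; the support computation and the coordinatewise reading of the order are then just bookkeeping, and the lemma follows.
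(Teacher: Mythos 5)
Your proof is correct and matches the paper's own argument exactly: the paper likewise takes the coordinatewise intersection $\La_{\alpha<\gamma}p_\alpha$, invokes \Cref{closure} to see each $\Cap_{\alpha<\gamma}p_\alpha(\xi)\in\forcing{h_\xi}$, and notes that the support remains of size $\leq\kappa$. Your observation that one uses the explicit form of the lower bound from the proof of \Cref{closure} (the full intersection) is exactly how the paper reads that lemma as well.
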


We will also need a generalisation of the fusion lemma to work on product forcings. The generalisation of fusion described here is analogous to what is described in \cite{Kan80} or \cite{BBFM}.  

Given $p,q\in\bar{\bb S}$, $\alpha<\kappa$, and $Z\subset A$ with $|Z|<\kappa$, let $q\leq_{Z,\alpha}p$ iff $q\leq p$ and for each $\xi\in Z$ we have $q(\xi)\leq_\alpha p(\xi)$.

A {\bf generalised fusion sequence} is a sequence $\ab{(p_\alpha,Z_\alpha)\mid \alpha<\kappa}$ such that:

\begin{itemize}
\item $p_\alpha\in \bar {\bb S}$ and $Z_\alpha\in [A]^{<\kappa}$ for each $\alpha<\kappa$,
\item $p_{\beta}\leq_{Z_\alpha,\alpha}p_\alpha$ and $Z_\alpha\subset Z_{\beta}$ for all $\alpha\leq\beta<\kappa$,
\item for limit $\delta$ we have $Z_\delta=\Cup_{\alpha<\delta} Z_\alpha$,
\item $\Cup_{\alpha<\kappa} Z_\alpha=\Cup_{\alpha<\kappa}\supp(p_\alpha)$.
\end{itemize}

\begin{lmm}
If $\ab{(p_\alpha,Z_\alpha)\mid \alpha<\kappa}$ is a generalised fusion sequence, then $\La_{\alpha<\kappa}p_\alpha\in \bar{\bb S}$.
\end{lmm}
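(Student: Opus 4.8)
The plan is to reduce the claim to the single-coordinate fusion lemma already proved for $\forcing{h}$, applied coordinatewise. Write $p=\La_{\alpha<\kappa}p_\alpha$, so $p(\xi)=\Cap_{\alpha<\kappa}p_\alpha(\xi)$ for each $\xi\in A$. By the last clause of the definition of a generalised fusion sequence, $\supp(p)\subset\Cup_{\alpha<\kappa}\supp(p_\alpha)=\Cup_{\alpha<\kappa}Z_\alpha$, and I must first check that this union has size ${\leq}\kappa$: each $Z_\alpha\in[A]^{<\kappa}$ and there are $\kappa$ many of them, so $|\Cup_\alpha Z_\alpha|\leq\kappa$ since $\kappa$ is regular. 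Hence $p\in\c C$ and $|\supp(p)|\leq\kappa$, so it only remains to see that $p(\xi)\in\forcing{h_\xi}$ for each $\xi\in A$, since then $p\in\bar{\bb S}$ by definition, and the fact that $p\leq p_\alpha$ for all $\alpha$ is immediate from $p(\xi)\subset p_\alpha(\xi)$ coordinatewise together with the remark (made when $\La$ was introduced) that intersections along a descending chain give a lower bound in each $\forcing{h_\xi}$.

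Next I would fix $\xi\in A$ and show $p(\xi)\in\forcing{h_\xi}$ by exhibiting a genuine fusion sequence in $\forcing{h_\xi}$ with intersection $p(\xi)$, so that the single-coordinate fusion lemma applies. The natural candidate is the sequence $\ab{p_\alpha(\xi)\mid\alpha<\kappa}$ itself, but this is only literally a fusion sequence in the coordinates $\xi$ that eventually lie in the $Z_\alpha$'s: if $\xi\in Z_{\alpha_0}$, then for all $\alpha_0\leq\alpha\leq\beta<\kappa$ we have $\xi\in Z_\alpha$, hence $p_\beta\leq_{Z_\alpha,\alpha}p_\alpha$ gives $p_\beta(\xi)\leq_\alpha p_\alpha(\xi)$, which is exactly the fusion condition $\Split_\alpha(p_\beta(\xi))=\Split_\alpha(p_\alpha(\xi))$ for the tail of the sequence. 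Since the intersection over a tail of a fusion sequence equals the full intersection, $p(\xi)=\Cap_{\alpha\geq\alpha_0}p_\alpha(\xi)$, so the fusion lemma for $\forcing{h_\xi}$ — applied to the shifted sequence $\ab{p_{\alpha_0+\alpha}(\xi)\mid\alpha<\kappa}$, which is a bona fide fusion sequence — yields $p(\xi)\in\forcing{h_\xi}$.

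The remaining case is $\xi\notin\Cup_{\alpha<\kappa}Z_\alpha$; by the last clause $\xi\notin\supp(p_\alpha)$ for every $\alpha$, so $p_\alpha(\xi)=\ft_{\forcing{h_\xi}}$ for all $\alpha$ and thus $p(\xi)=\ft_{\forcing{h_\xi}}\in\forcing{h_\xi}$ trivially. I would also handle any intermediate case uniformly by noting that for each $\xi\in\Cup_\alpha Z_\alpha$ there is a least $\alpha_0$ with $\xi\in Z_{\alpha_0}$ (using that $Z_\delta=\Cup_{\alpha<\delta}Z_\alpha$ at limits, so $\xi$ enters at a successor stage or stage $0$), so the tail argument above always applies. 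Putting the two cases together gives $p(\xi)\in\forcing{h_\xi}$ for every $\xi\in A$, hence $p\in\bar{\bb S}$.

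The only genuinely delicate point — the ``main obstacle'' such as it is — is the bookkeeping that a fixed coordinate $\xi$ is eventually frozen at splitting level $\alpha$: one must be careful that $q\leq_{Z,\alpha}p$ entails $q(\xi)\leq_\alpha p(\xi)$ (not merely $q(\xi)\leq p(\xi)$) precisely when $\xi\in Z$, and that the chain of the $Z_\alpha$ is increasing and continuous so that once $\xi\in Z_{\alpha_0}$ it stays in. Everything else is a direct appeal to \Cref{closure} (for the lower bound in each coordinate) and to the single-coordinate fusion lemma (for membership in each $\forcing{h_\xi}$), together with the regularity of $\kappa$ for the support bound.
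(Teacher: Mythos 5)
Your proof is correct and follows essentially the same route as the paper: observe that each coordinate $\xi$ in the support eventually enters the $Z_\alpha$'s, so the tail $\ab{p_\alpha(\xi)\mid \alpha>\eta_\xi}$ is a fusion sequence in $\forcing{h_\xi}$ and the single-coordinate fusion lemma gives $p(\xi)\in\forcing{h_\xi}$, while the support bound $|\Cup_\alpha Z_\alpha|\leq\kappa$ handles membership in $\bar{\bb S}$. Your explicit treatment of the coordinates outside $\Cup_\alpha Z_\alpha$ and of the reindexing of the tail is slightly more careful than the paper's, but it is the same argument.
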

\begin{proof}
Suppose that $\ab{(p_\alpha,Z_\alpha)\mid \alpha<\kappa}$ is a generalised fusion sequence, and let $p=\La_{\alpha<\kappa}p_\alpha$. The last condition of the definition above implies that for any $\xi\in\supp(p)$ there is some $\eta_\xi\in\kappa$ such that $\xi\in Z_{\eta_\xi}$. This means that if $\beta\geq\alpha\geq\eta_\xi$, then $p_\beta(\xi)\leq_\alpha p_\alpha(\xi)$, and thus $\ab{p_\alpha(\xi)\mid \alpha>\eta_\xi}$ is a fusion sequence in $\forcing{h_\xi}$. Since $\forcing{h_\xi}$ is closed under fusion sequences, we therefore conclude that $p(\xi)=\Cap_{\alpha<\kappa} p_\alpha(\xi)\in\forcing{h_\xi}$. Since $\supp(p)=\Cup_{\alpha<\kappa} Z_\alpha$, we see that $|\supp(p)|\leq\kappa$, thus we can conclude that $p\in \bar{\bb S}$.
\end{proof}

By \Cref{closure product}, $\bar{\bb S}$ preserves all cardinalities up to and including $\kappa$. Suppose that each $\forcing{h_\xi}$ has the $F$-Sacks property for some suitably large $F$. We will show in the next lemma that this implies that $\bar{\bb S}$ has the $F$-Sacks property and therefore preserves $\kappa^+$. Finally, if we assume that $\b V\md\ap{2^\kappa=\kappa^+}$, then a standard $\Delta$-system argument (see e.g. \cite{Jech} Lemma 15.4) shows that $\bar{\bb S}$ is ${<}\kappa^{++}$-cc as well. Thus, $\bar{\bb S}$ preserves all cardinals and cofinalities assuming that each $\forcing{h_\xi}$ has the $F$-Sacks property for some fixed $F\in\gbs$.

Before we prove the lemma, let us introduce some notation related to the product of forcings. Suppose $\bar {\bb P}=\prod_{\xi\in A}\bb P_\xi$ is a product with ${\leq}\kappa$-support and $G$ is $\bar{\bb P}$-generic. If $B\subset A$, we define 
\begin{align*}
\bar{\bb P}\restriction B&=\st{p\restriction B\mid p\in \bar{\bb P}}&G\restriction B&=\st{p\restriction B\mid p\in G}
\end{align*}
Let $B^c=A\setminus B$, then clearly $\bar{\bb P}$ and $(\bar{\bb P}\restriction B)\times (\bar{\bb P}\restriction B^c)$ are forcing equivalent, $(G\restriction B)\times (G\restriction B^c)$ is $(\bar{\bb P}\restriction B)\times (\bar{\bb P}\restriction B^c)$-generic and $\b V[G]=\b V[(G\restriction B)\times (G\restriction B^c)]=\b V[G\restriction B][G\restriction B^c]$.

\begin{lmm}\label{having sacks product}
Let $B\subset A$ be sets of ordinals and $B^c=A\setminus B$, and consider a sequence of functions $\ab{h_\xi\mid \xi\in A}$. We define the ${\leq}\kappa$-support product $\bar {\bb S}=\prod_{\xi\in A}\forcing{h_\xi}$ and we assume $G$ is an $\bar{\bb S}$-generic filter. If $F:\alpha\mapsto (\sup_{\xi\in B^c}h_\xi(\alpha))^{|\alpha|}$ is a well-defined function in $\gbs$, then for each $f\in (\gbs)^{\b V[G]}$ there is $\phi\in(\Loc_F)^{\b V[G\restriction B]}$ such that $f\in^*\phi$.
\end{lmm}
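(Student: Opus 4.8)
The plan is to factor the product and run a generalised fusion over the first factor while reducing $\dot f$ modulo the second. Write $\bb P_0=\bar{\bb S}\restriction B$ and $\bb P_1=\bar{\bb S}\restriction B^c$, so that $\bar{\bb S}$ is forcing-equivalent to $\bb P_0\times\bb P_1$, the intermediate model $\b V[G\restriction B]$ is the $\bb P_0$-extension of $\b V$, and $F\colon\alpha\mapsto(\sup_{\xi\in B^c}h_\xi(\alpha))^{|\alpha|}$ depends only on the functions attached to $\bb P_1$. Fix a $\bar{\bb S}$-name $\dot f\in\b V$ for $f$. It suffices to prove, in $\b V$: for every $p\in\bb P_1$ there are a condition $q\leq p$ in $\bb P_1$ and a $\bb P_0$-name $\dot\phi$ with $\ft_{\bb P_0}\fc_{\bb P_0}\ap{\dot\phi\in\Loc_F}$ and $\ft_{\bb P_0}\times q\fc_{\bar{\bb S}}\ap{\dot f\in^*\dot\phi}$; for then, as $p$ ranges over $\bb P_1$, the conditions $\ft_{\bb P_0}\times q$ so obtained are predense in $\bar{\bb S}$, $G$ meets one of them, and evaluating at $G$ gives $f\in^*\dot\phi^{G\restriction B}$ with $\dot\phi^{G\restriction B}\in(\Loc_F)^{\b V[G\restriction B]}$. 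The point that replaces ``deciding $\dot f(\alpha)$'' in the proof of \Cref{having Sacks} is that for each $\alpha<\kappa$ the set of $q\in\bb P_1$ for which $\ft_{\bb P_0}\times q$ forces $\dot f(\check\alpha)$ equal to some ground-model $\bb P_0$-name for an ordinal is dense in $\bb P_1$ (by the product-forcing correspondence $\dot f(\check\alpha)$ is a $\bb P_1$-name for an element of a ground-model set of $\bb P_0$-names for ordinals below $\kappa$, so it is decided by a dense set of $\bb P_1$-conditions); this is exactly why the slalom can be kept in $\b V[G\restriction B]$.

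In $\b V$ I construct a generalised fusion sequence $\ab{(p_\alpha,Z_\alpha)\mid\alpha<\kappa}$ on $\bb P_1$ with $p_0=p$ and $Z_\alpha\subset B^c$, together with $\bb P_0$-names $\dot D_\alpha$ such that $\ft_{\bb P_0}\fc\ap{|\dot D_\alpha|\leq\check F(\check\alpha)}$ for $\alpha\geq\omega$ and $\ft_{\bb P_0}\times p_{\alpha+1}\fc_{\bar{\bb S}}\ap{\dot f(\check\alpha)\in\dot D_\alpha}$. Then $q=\La_{\alpha<\kappa}p_\alpha\in\bb P_1$ by the generalised fusion lemma, $\ft_{\bb P_0}\times q\fc\ap{\dot f(\check\alpha)\in\dot D_\alpha}$ for all $\alpha$, and the $\bb P_0$-name $\dot\phi$ for $\alpha\mapsto\dot D_\alpha$ (put $\dot D_\alpha=\emp$ for $\alpha<\omega$, which is harmless since $\in^*$ ignores initial segments) is the required slalom name. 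As in \Cref{having Sacks} I keep, in each coordinate of $Z_\alpha$, the analogue of property $(*)$, so that sharp refinements there are ${\leq_\alpha}$-refinements; at limits $p_\delta=\La_{\alpha<\delta}p_\alpha$ (using ${<}\kappa$-closure, \Cref{closure product}) and $Z_\delta=\Cup_{\alpha<\delta}Z_\alpha$. Alongside the generalised fusion requirement $\Cup_{\alpha<\kappa}Z_\alpha=\Cup_{\alpha<\kappa}\supp(p_\alpha)$ I impose the constraint $|Z_\alpha|\leq|\alpha|$ for infinite $\alpha$; since $\kappa$ is regular and the supports (which may grow under $\leq$) are revealed only along the recursion, the two demands are met by diagonally absorbing the supports into the $Z_\alpha$ one new coordinate at a time.

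The successor step $p_\alpha\mapsto p_{\alpha+1}$ runs the argument of \Cref{having Sacks} simultaneously over the coordinates in $Z_\alpha$. Passing each $p_\alpha(\xi)$ ($\xi\in Z_\alpha$) to its sharp tree, call a \emph{configuration} any function $c$ on $Z_\alpha$ such that $c(\xi)$ is a successor of a node in the $\alpha$-th splitting level of that sharp tree; since a sharp tree has at most $h_\xi(\alpha)^{|\alpha|}$ such successor nodes there are at most $\prod_{\xi\in Z_\alpha}h_\xi(\alpha)^{|\alpha|}\leq(\sup_{\xi\in B^c}h_\xi(\alpha))^{|\alpha|\cdot|Z_\alpha|}=F(\alpha)$ configurations (using $|Z_\alpha|\leq|\alpha|$ and that $\kappa$ is inaccessible), in particular fewer than $\kappa$. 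For each configuration $c$ find, in $\bb P_1$, a condition $q^c\leq p_\alpha^c$ — where $p_\alpha^c$ is $p_\alpha$ with each $\xi\in Z_\alpha$ first sharpened and then restricted to the subtree above $c(\xi)$ — together with a ground-model $\bb P_0$-name $\dot x^c$ for an ordinal such that $\ft_{\bb P_0}\times q^c\fc_{\bar{\bb S}}\ap{\dot f(\check\alpha)=\dot x^c}$ (with $\dot x^c$ promoted to $\bar{\bb S}$). Now amalgamate the $q^c$ coordinatewise exactly as in \Cref{having Sacks}: in each coordinate $\xi\in Z_\alpha$, process the configurations one at a time, each time restricting above the relevant node, passing to $q^c(\xi)$, and re-grafting the cone generated a further $\alpha$ splitting levels up, so that the splitting levels line up and the outcome is again a $\forcing{h_\xi}$-condition (over-splitting being permitted); use ${<}\kappa$-closure at limit sub-stages. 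This produces $p_{\alpha+1}\leq_{Z_\alpha,\alpha}p_\alpha$, and one lets $\dot D_\alpha$ be the $\bb P_0$-name for the set of interpretations of the $\dot x^c$, $c$ ranging over configurations at stage $\alpha$. The conditions obtained from $p_{\alpha+1}$ by restricting through a configuration are predense below $p_{\alpha+1}$, and each such restriction $q'$ satisfies $\ft_{\bb P_0}\times q'\fc_{\bar{\bb S}}\ap{\dot f(\check\alpha)\in\dot D_\alpha}$, whence $\ft_{\bb P_0}\times p_{\alpha+1}\fc\ap{\dot f(\check\alpha)\in\dot D_\alpha}$.

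The main work, I expect, lies in the two features absent from \Cref{having Sacks}. Conceptually, the side factor $\bb P_0$ is carried along by reducing each $\dot f(\check\alpha)$ to a ground-model $\bb P_0$-name instead of to an ordinal — this is precisely what makes the slalom land in $\b V[G\restriction B]$ rather than merely in $\b V[G]$. Combinatorially, keeping the slalom width at the prescribed $F(\alpha)=(\sup_{\xi\in B^c}h_\xi(\alpha))^{|\alpha|}$ forces $|Z_\alpha|\leq|\alpha|$, which must be squared with the generalised fusion lemma's demand that $\Cup_\alpha Z_\alpha$ exhaust $\Cup_\alpha\supp(p_\alpha)$; getting this diagonal bookkeeping right via the regularity of $\kappa$ is the delicate point. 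The coordinatewise re-grafting and the predensity check are then routine adaptations of \Cref{having Sacks}.
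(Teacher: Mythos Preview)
Your approach—running fusion only in $\bb P_1=\bar{\bb S}\restriction B^c$ while treating the $\bb P_0$-side passively—has a genuine gap at its key step. The assertion that ``the set of $q\in\bb P_1$ for which $\ft_{\bb P_0}\times q$ forces $\dot f(\check\alpha)$ equal to some ground-model $\bb P_0$-name for an ordinal is dense in $\bb P_1$'' is not correct. The product correspondence tells you that $\dot f(\check\alpha)$ is a $\bb P_1$-name for a $\bb P_0$-name, but that $\bb P_0$-name lives in $\b V[G_1]$, not in $\b V$; since $\bb P_1$ adds new subsets of $\kappa$, it adds genuinely new $\bb P_0$-names, and these cannot be decided to a ground-model $\bb P_0$-name by any $q\in\bb P_1$. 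Concretely, take $B=\{0\}$, $B^c=\{1\}$, let $\dot c_0,\dot c_1$ be the respective generic $\kappa$-reals, and consider $\dot f(\check\alpha)=\dot c_1(\dot c_0(0))$. For $(\ft,q)\fc\ap{\dot f(\check\alpha)=\dot y}$ with $\dot y$ a $\bb P_0$-name, the value $c_1(c_0(0))$ would have to be determined by $G_0$ and the single condition $q$; but $c_0(0)$ ranges over all of $\kappa$, while the stem of $q$ has length ${<}\kappa$, so $q$ cannot pin down $c_1(c_0(0))$. No strengthening in $\bb P_1$ alone suffices.

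The paper avoids this by running the generalised fusion in the \emph{full} product $\bar{\bb S}$, so that the sets $Z_\xi$ meet both $B$ and $B^c$. At stage $\xi$ one decides $\dot f(\check\xi)$ to a genuine ordinal $\beta_\xi^\eta$ for each configuration $g_\eta$ over all of $Z_\xi$, and only \emph{afterwards} groups these ordinals according to the restriction $g_\eta\restriction(Z_\xi\cap B)$: the set $D_\xi^g=\{\beta_\xi^\eta\mid g_\eta\restriction(Z_\xi\cap B)=g\restriction(Z_\xi\cap B)\}$ has size at most $|\c V_\xi'|\leq F(\xi)$ because only the $B^c$-part of the configuration is varying, and the name $\dot D_\xi$ built from these sets is then shown to be decided by $G\restriction B$ since compatible conditions in $\bb P_0$ must pick the same element of each $V_\xi^\beta$ for $\beta\in Z_\xi\cap B$. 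In short, the $\bb P_0$-coordinates must participate actively in the fusion; the small width of the slalom comes not from reducing to $\bb P_0$-names, but from the counting of $B^c$-configurations after the ordinals have been decided.
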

\begin{proof}
Let $\dot f$ be a name such that $\fc_{\bar{\bb S}}\ap{\dot f:\check\kappa\to\check\kappa}$ and let $p\in\bar{\bb S}$, then we will construct a name $\dot \phi$ and a condition $p'\leq p$ such that $p'\fc\ap{\dot\phi\in(\Loc_{\check F})^{\b V[\dot G\restriction\check B]}}$.

The proof is essentially the same as the proof of \Cref{having Sacks}, except that we work with generalised fusion sequences and have to construct a name $\dot\phi$ for the appropriate $F$-slalom, since such slalom is not generally present in the ground model. That is, we will construct a sequence $\ab{(p_\xi,Z_\xi)\mid \xi<\kappa}$ with each $p_\xi\in \bar{\bb S}$ that is a generalised fusion sequence in $\bar{\bb S}$ and names $\dot D_\xi$ for sets of ordinals $D_\xi\in \b V[G\restriction B]$ with $|D_\xi|\leq F(\xi)$, such that $p_{\xi+1}\fc\ap{\dot f(\check\xi)\in\dot D_\xi}$. Furthermore, for each $\xi<\kappa$ and $\beta\in Z_\xi$ we will make sure that $p_\xi(\beta)\in(\forcing{h_\beta})^*$ is sharp. To start, we let $p_0= p$ and we let $Z_0=\emp$. At limit stages $\delta$ we can define $p_\delta'=\La_{\xi<\delta}p_\xi$ and let $p_\delta\leq p_\delta'$ be defined elementwise such that $p_\delta(\beta)=(p_\delta'(\beta))^*$ is sharp for each $\beta\in Z_\delta$.

Suppose we have defined $p_\xi\in \bar{\bb S}$ and $Z_\xi$ and that $|Z_\xi|\leq |\xi|$, then for each $\beta\in Z_\xi$ we define 
\begin{align*}
V^\beta_\xi=\Cup\st{\suc(u,p_\xi(\beta))\smid u\in \Split_\xi(p_\xi(\beta))}
\end{align*}
Let $\c V_\xi$ be the set of choice functions on $\{V_\xi^\beta\mid \beta\in Z_\xi\}$ and $\c V_\xi'$ the set of choice functions on $\{V_\xi^\beta\mid \beta\in Z_\xi\setminus B\}$. By induction hypothesis $p_\xi(\beta)\in(\forcing{h_\beta})^*$ for each $\beta\in Z_\xi\setminus B$, hence we know that $|\Split_\xi(p_\xi(\beta))|\leq h_\beta(\xi)^{|\xi|}\leq F(\xi)$, and thus $|V_\xi^\beta|\leq F(\xi)$ for all $\beta\in Z_\xi\setminus B$. Since we assume that $|Z_\xi|\leq |\xi|$, we therefore have $|\c V_\xi'|\leq F(\xi)^{|\xi|}=F(\xi)$ (assuming without loss of generality that $F(\xi)$ is infinite). For any choice function $g\in \c V_\xi$, let $(p_\xi)_g$ be the condition with $(p_\xi)_g(\beta)=p_\xi(\beta)$ if $\beta\notin Z_\xi$ and  $(p_\xi)_{g}(\beta)=({p_\xi(\beta)})_{g(\beta)}$ if $\beta\in Z_\xi$. Here $({p_\xi(\beta)})_{g(\beta)}$ is the subtree of $p_\xi(\beta)$ generated by the initial segment $g(\beta)$.

Let $\zeta=|\c V_\xi|$ then $\zeta<\kappa$ by inaccessibility of $\kappa$. Fix some enumeration $\ab{g_\eta\mid \eta<\zeta}$ of $\c V_\xi$, which we will use to define a decreasing sequence of conditions $r_\eta$ with $r_\eta\leq_{Z_\xi,\xi}p_\xi$ for each $\eta<\zeta$. Let $r_0=p_\xi$. For limit $\delta<\zeta$ let $r_\delta=\La_{\eta<\delta}r_\eta$, which is a condition by ${<}\kappa$-closure (\Cref{closure product}). Assuming that $r_\eta\leq_{Z_\xi,\xi}p_\xi$ for each $\eta<\delta$, it is easy to see that $r_\delta\leq_{Z_\xi,\xi}p_\xi$ as well.

Suppose $r_\eta$ is defined and $r_\eta\leq_{Z_\xi,\xi} p_\xi$, then in particular $r_\eta(\beta)\leq_\xi p_\xi(\beta)$ for all $\beta\in Z_\xi$, and thus $\Split_\xi(r_\eta(\beta))=\Split_\xi(p_\xi(\beta))$ for all $\beta\in Z_\xi$, and therefore by definition of the ordering on $\forcing{h_\beta}$ and the fact that $p_\xi(\beta)$ is sharp, we see that $V_\xi^\beta$ is exactly the set of successors of nodes at the $\xi$-th splitting level of $r_\eta(\beta)$. Take the $\eta$-th choice function $g_\eta\in \c V_\xi$, and let $r'_{\eta}\leq (r_\eta)_{g_\eta}$ be such that $r'_{\eta}\fc\ap{\dot f(\check\xi)=\check \beta^\eta_\xi}$ for some ordinal $\beta^\eta_\xi$. We define $r_{\eta+1}$ elementwise.

If $\beta\notin Z_\xi$, then we simply take $r_{\eta+1}(\beta)=r_{\eta}'(\beta)$.

If $\beta\in Z_\xi$, fix some $w\in\suc(u,r'_{\eta}(\beta))$ for some $u\in\Split_\xi(r'_{\eta}(\beta))$ and consider the subtree $(r'_{\eta}(\beta))_w$ generated by the initial segment $w$. Now we are ready to define $r_{\eta+1}(\beta)$ as
\begin{align*}
r_{\eta+1}(\beta)=(r'_{\eta}(\beta))_w\ \cup\ \st{u\in r_{\eta}(\beta)\smid \exists v\in V_\xi^\beta\setminus \{g_\eta(\beta)\}\ (u\subseteq v\text{ or }v\subset u)}.
\end{align*}
In words, $r_{\eta+1}(\beta)$ is the result of replacing the extensions of $g_\eta(\beta)\in r_\eta(\beta)$ by $(r'_\eta(\beta))_w$ that decides $\dot f$ at $\xi$, where the function of taking the subtree of $r'_\eta(\beta)$ generated by $w$ is to make sure $r_{\eta+1}(\beta)$ has enough successors at each splitting level to be in $\forcing{h_\beta}$.

To finish the construction of the next condition in the fusion sequence, we use ${<}\kappa$-closure to define $p_{\xi+1}'=\La_{\eta<\zeta}\ r_\eta$ and let $p_{\xi+1}=(p'_{\xi+1})^*$ be sharp. To see that $p_{\xi+1}\leq_{Z_\xi,\xi}p_\xi$, note that for every $\beta\in Z_\xi$ and $v\in V^\beta_\xi$ we have $v\in r_\eta(\beta)$ for all $\eta<\zeta$, hence $v\in p_{\xi+1}(\beta)$. This implies by definition of $V^\beta_\xi$ that $p_{\xi+1}(\beta)\leq_\xi p_\xi(\beta)$ for all $\beta\in Z_\xi$. Finally, we can let $Z_{\xi+1}=Z_\xi\cup\st{\delta}$ for some ordinal $\delta$, using bookkeeping to make sure that $\Cup_{\xi<\kappa}Z_\xi=\Cup_{\xi<\kappa}\supp(p_\xi)$.

Note that the set of conditions $r\leq p_{\xi+1}$ such that $|r(\beta)\cap V_\xi^\beta|=1$ for all $\beta\in Z_\xi$, is dense below $p_{\xi+1}$. For any such $r$, let $g$ map $\beta$ to the unique element of $r(\beta)\cap V_\xi^\beta$ for each $\beta\in Z_\xi$, then $g\in \c V_\xi$, so we see that there exists $\eta<\zeta$ such that $g=g_\eta$. We will show that $r\leq r_\eta'$, which implies that $r\fc\ap{\dot f(\check \xi)=\check\beta_\xi^\eta}$.

For any $\beta$ we have $r(\beta)\leq p_{\xi+1}(\beta)\leq r_{\eta+1}(\beta)$. If $\beta\notin Z_\xi$, then we simply have $r_{\eta+1}(\beta)=r_\eta'(\beta)$, thus we are done. Otherwise $\beta\in Z_\xi$, and we know that $g(\beta)$ is an initial segment of the stem of $r(\beta)$, hence $r(\beta)=(r(\beta))_{g(\beta)}\subset (r_{\eta+1}(\beta))_{g(\beta)}= (r'_\eta(\beta))_w$, where $w$ is as in the definition of $r_{\eta+1}(\beta)$ above. Since $r(\beta)\leq r_{\eta+1}(\beta)$, we also have $r(\beta)=(r(\beta))_w\leq(r_{\eta+1}(\beta))_w=(r'_\eta(\beta))_w$, and since $(r'_\eta(\beta))_w\leq r'_\eta(\beta)$, we see that $r(\beta)\leq r'_\eta(\beta)$.

We now construct the names $\dot D_\xi$ such that:
\begin{align*}
p_{\xi+1}\fc\ap{\dot f(\check\xi)\in \dot D_\xi\text{ and }\dot D_\xi\in \b V[\dot G\restriction\check B]\text{ and }|\dot D_\xi|\leq \check F(\check\xi)}.
\end{align*}
For any $g\in\c V_\xi$, let $g''=g\restriction (Z_\xi\cap B)$, and let $E_g=\st{\eta<\zeta\mid \exists g'\in \c V_\xi'(g'\cup g''=g_\eta)}$ and  $D_\xi^g=\st{\beta_\xi^\eta\mid \eta\in E_g}$. Since $|\c V_\xi'|\leq F(\xi)$, we see that $|E_g|\leq F(\xi)$, hence $|D_\xi^g|\leq F(\xi)$. Clearly, if $g,\tilde g\in \c V_\xi$ and $g\restriction (Z_\xi\cap B)=\tilde g\restriction (Z_\xi\cap B)$, then $D_\xi^g=D_\xi^{\tilde g}$.

Let $\c A_\xi$ be an antichain below $p_{\xi+1}$ such that $r\in \c A_\xi$ implies $|r(\beta)\cap V_\xi^\beta|=1$ for all $\beta\in Z_\xi$, and let $g_r\in \c V_\xi$ be such that $g_r(\beta)$ is the single element of $r(\beta)\cap V_\xi^\beta$ for each $\beta\in Z_\xi$. We define the name $\dot D_\xi=\st{(r,\check D_\xi^{g_r})\mid r\in \c A_\xi}$. It is clear by the above that for each $r\in \c A$ and $\eta$ such that $g_r=g_\eta$ we have $r\fc \ap{\dot f(\check\xi)=\check \beta_\xi^\eta\in \check D_\xi^{g_r}\text{ and }|\check D_\xi^{g_r}|\leq\check F(\check \xi)}$, so by denseness $p_{\xi+1}\fc\ap{\dot f(\check\xi)\in \dot D_\xi\text{ and }|\dot D_\xi|\leq \check F(\check \xi)}$. 

To see that $p_{\xi+1}\fc\ap{ \dot D_\xi\in \b V[\dot G\restriction\check B]}$, we argue within $\b V[ G\restriction B]$. For every $r,\tilde r\in \c A_\xi$ such that $r\restriction B\in G\restriction B$ and $\tilde r\restriction B\in G\restriction B$ we see that the corresponding $g_r$ and $g_{\tilde r}$ have the property that $g_r\restriction (Z_\xi\cap B)=g_{\tilde r}\restriction (Z_\xi\cap B)$, and therefore $D_\xi^{g_r}=D_\xi^{g_{\tilde r}}$. Thus, we can fix any arbitrary such $r\in \c A_\xi$ for which $r\restriction B\in G\restriction B$ holds, and see that $\b V[G\restriction B]\md \ap{p_{\xi+1}\restriction B^c\fc\dot D_\xi =\check D_\xi^{g_r}}$.

Let $p'=\La_{\xi<\kappa}p_\xi$ be the limit of the generalised fusion sequence, and let $\dot\phi$ be a name such that 
$p'\fc\ap{\dot\phi:\check\xi\mapsto \dot D_\xi}$, then $\dot\phi$ names an $F$-slalom in $\b V[G\restriction B]$ and $p'\fc\ap{\dot f\in^*\dot \phi}$.
\end{proof}

If we let $B=\emp$ in the definition of the lemma, then we can simplify this lemma to the following corollary, providing us with the preservation of the Sacks property.

\begin{crl}
\renewcommand{\qedsymbol}{$\square$}
If $\bar{\bb S}=\prod_{\xi\in A}\forcing{h_\xi}$ and each $h_\xi$ is bounded by $h\in\gbs$ and $F:\alpha\mapsto h(\alpha)^{|\alpha|}$, then $\bar{\bb S}$ has the $F$-Sacks property.\qedhere
\end{crl}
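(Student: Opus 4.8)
The plan is to read the corollary off from \Cref{having sacks product} by taking $B=\emp$. With this choice $B^c=A$ and $G\restriction B$ is trivial, whence $\b V[G\restriction B]=\b V$, and the function occurring in the lemma specialises to $\tilde F\colon\alpha\mapsto(\sup_{\xi\in A}h_\xi(\alpha))^{|\alpha|}$. First I would check that $\tilde F$ is a legitimate object of $\gbs$ dominated by $F$: since each $h_\xi$ is bounded by $h$ we have $\sup_{\xi\in A}h_\xi(\alpha)\leq h(\alpha)<\kappa$, a supremum of cardinals is again a cardinal, and hence $\tilde F(\alpha)\leq h(\alpha)^{|\alpha|}=F(\alpha)<\kappa$ for every $\alpha$. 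Thus every $\tilde F$-slalom is an $F$-slalom, and it suffices to establish the $\tilde F$-Sacks property for $\bar{\bb S}$.

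So fix a name $\dot f$ and $p\in\bar{\bb S}$ with $p\fc\ap{\dot f\colon\check\kappa\to\check\kappa}$, and run the construction from the proof of \Cref{having sacks product}. I would now inspect what it produces when $B=\emp$. There the names $\dot D_\xi$ have the form $\st{(r,\check D_\xi^{g_r})\mid r\in\c A_\xi}$, and $D_\xi^{g_r}$ depends only on $g_r\restriction(Z_\xi\cap B)$; with $B=\emp$ this restriction is the empty function for every $r$, so all the sets $D_\xi^{g_r}$ coincide in a single ground-model set $D_\xi$ with $|D_\xi|\leq\tilde F(\xi)$. Hence $\dot D_\xi$ is forced by $p_{\xi+1}$ to equal $\check D_\xi$, and the whole construction degenerates to a product version of the fusion in the proof of \Cref{having Sacks}. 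Setting $\phi\colon\xi\mapsto D_\xi$ gives an $\tilde F$-slalom that genuinely lies in $\b V$.

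Finally, let $p'=\La_{\xi<\kappa}p_\xi$ be the limit of the generalised fusion sequence. The conditions $p_\xi$ are $\leq$-decreasing, so $p'\leq p_{\xi+1}$ for every $\xi$; since $p_{\xi+1}\fc\ap{\dot f(\check\xi)\in\dot D_\xi}$ and $p_{\xi+1}\fc\ap{\dot D_\xi=\check D_\xi}$, we obtain $p'\fc\ap{\dot f(\check\xi)\in\check\phi(\check\xi)}$ for all $\xi<\kappa$, together with $p'\leq p$. This is exactly the $\tilde F$-Sacks property, and hence, since $\tilde F\leq F$, the $F$-Sacks property. The one point worth isolating --- not a real obstacle, as all the labour sits in the lemma --- is the observation that $B=\emp$ collapses each name $\dot D_\xi$ to a fixed ground-model set and that the fusion limit $p'$ forces $\dot f(\xi)\in D_\xi$ for \emph{every} $\xi$, not merely on a tail as the ``$f\in^*\phi$'' phrasing of the lemma might suggest; everything else is bookkeeping already discharged in the proof of \Cref{having sacks product}.
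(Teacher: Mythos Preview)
Your proposal is correct and follows exactly the approach the paper indicates: the paper states the corollary with no proof beyond the remark preceding it that one should ``let $B=\emp$ in the definition of the lemma,'' and you have simply spelled out what this specialisation yields (including the check that $\tilde F\leq F$ and the observation that the names $\dot D_\xi$ collapse to ground-model sets). Your final remark that the fusion limit in fact forces $\dot f(\xi)\in D_\xi$ for every $\xi$ rather than merely on a tail is a worthwhile clarification, but it is not a genuine extra step---the proof of \Cref{having sacks product} already establishes this at each successor stage.
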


Finally the following lemma is based on \Cref{not having Sacks} and shows how we can use products of forcings $\forcing{h_\xi}$ to increase the cardinality of $\dstar{F}$.

\begin{lmm}\label{not having sacks product}
Let $B\subset A$ be sets of ordinals, and consider a sequence of functions $\ab{h_\xi\mid \xi\in A}$. We define the ${\leq}\kappa$-support product $\bar {\bb S}=\prod_{\xi\in A}\forcing{h_\xi}$ and we assume $G$ is a $\bar{\bb S}$-generic filter. Let $\ab{S_\xi\mid \xi\in B}$ be a sequence of stationary sets. If $F$ is such that for each $\xi\in B$ we have $F(\alpha)<h_\xi(\alpha)$ for all $\alpha\in S_\xi$, then $\b V[G]\md\ap{|B|\leq\dstar{F}}$.
\end{lmm}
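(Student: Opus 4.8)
The plan is to argue by contradiction, exhibiting in $\b V[G]$ a $\kappa$-real — one of the generic reals added on a coordinate in $B$ — that no $F$-slalom of $\b V[G]$ localises. First a harmless reduction: for a cofinal $F$, no family $\st{\phi_i\mid i<\kappa}$ of $F$-slaloms is dominating, since the function $g$ with $g(\alpha)=\sup\st{\sup(\phi_i(\alpha))+1\mid i\leq\alpha}$ is a well-defined member of $\gbs$ (for each $\alpha$ it is a supremum of fewer than $\kappa$ ordinals, each below $\kappa$, hence below $\kappa$ by regularity of $\kappa$) and, for $\alpha\geq i$, $g(\alpha)>\sup(\phi_i(\alpha))$ so $g(\alpha)\notin\phi_i(\alpha)$, whence $g\notin^*\phi_i$ for all $i<\kappa$. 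Thus $\dstar F\geq\kappa^+$ outright, and there is nothing to prove when $|B|\leq\kappa^+$; so I assume $|B|\geq\kappa^{++}$ and, towards a contradiction, fix in $\b V[G]$ a dominating family $\Phi\subset\Loc_F$ with $|\Phi|<|B|$. As elsewhere in this section I keep the assumption $\b V\md\ap{2^\kappa=\kappa^+}$.

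The crucial step is to pin each slalom of $\Phi$ down to a small sub-product. Fixing in $\b V$ a bijection $\kappa\times\kappa\to\kappa$, a slalom $\phi$ is coded by the subset $X_\phi=\st{\ab{\alpha,\beta}\mid\beta\in\phi(\alpha)}$ of $\kappa$. Since $\b V\md\ap{2^\kappa=\kappa^+}$, the forcing $\bar{\bb S}$ is ${<}\kappa^{++}$-cc (as observed just before \Cref{having sacks product}), so a nice name for a subset of $\kappa$ can be built from $\kappa$ maximal antichains each of size ${\leq}\kappa^+$, every condition of which has support of size ${\leq}\kappa$; hence for each $\phi\in\Phi$ there is $C_\phi\subset A$ with $|C_\phi|\leq\kappa^+$ such that $X_\phi$, and therefore $\phi$, lies in $\b V[G\restriction C_\phi]$. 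Put $C=\Cup_{\phi\in\Phi}C_\phi$. Because $|\Phi|<|B|$ and $\kappa^+<|B|$ (here is where $|B|\geq\kappa^{++}$ is used), we get $|C|\leq|\Phi|\cdot\kappa^+<|B|$, so I may fix a coordinate $\xi\in B\setminus C$.

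Now write $\b V'=\b V[G\restriction C]$, so that $\Phi\subset\b V'$. By the product decomposition recalled just before \Cref{having sacks product} (applied with $C$ in place of its parameter), $G\restriction\st{\xi}$ is $\forcing{h_\xi}$-generic over $\b V'$; let $f_\xi\in(\gbs)^{\b V[G]}$ be the generic $\kappa$-real it yields. By \Cref{closure product} the forcing $\bar{\bb S}\restriction C$ is ${<}\kappa$-closed, and a ${<}\kappa$-closed forcing preserves stationary subsets of $\kappa$, so $S_\xi$ is still stationary in $\b V'$; moreover $F(\alpha)<h_\xi(\alpha)$ for all $\alpha\in S_\xi$ by hypothesis. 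Hence \Cref{not having Sacks} applies over $\b V'$ to $\forcing{h_\xi}$, $F$ and $S_\xi$, and the density argument in its proof shows, for every $F$-slalom $\phi$ of $\b V'$, that $\st{\alpha<\kappa\mid f_\xi(\alpha)\notin\phi(\alpha)}$ is unbounded in $\kappa$; that is, $f_\xi\notin^*\phi$. In particular $f_\xi\notin^*\phi$ for all $\phi\in\Phi$, so $f_\xi\in(\gbs)^{\b V[G]}$ is localised by no member of $\Phi$ — contradicting that $\Phi$ is dominating in $\b V[G]$, and establishing $\b V[G]\md\ap{|B|\leq\dstar F}$.

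I expect the middle step to be the main obstacle: one must be certain that each of the fewer-than-$|B|$ slaloms really does live in a sub-product spanning only ${\leq}\kappa^+$ coordinates — this is exactly where the ${<}\kappa^{++}$-cc, hence $\b V\md\ap{2^\kappa=\kappa^+}$, is needed — and that the reduction to $|B|\geq\kappa^{++}$ then keeps $|C|<|B|$, leaving a coordinate of $B$ free for the evading generic. The other ingredients — mutual genericity of the factors of a ${\leq}\kappa$-support product, preservation of stationary subsets of $\kappa$ under ${<}\kappa$-closed forcing, and the absoluteness of "$f_\xi\notin^*\phi$" between $\b V'[G\restriction\st{\xi}]$ and $\b V[G]$ — are routine.
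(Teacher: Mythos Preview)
Your argument is essentially the paper's: use the ${<}\kappa^{++}$-cc to locate a coordinate in $B$ outside the supports of all the relevant slalom names, and show that the generic real on that coordinate evades every slalom in the family. One technical point needs care: your set $C=\Cup_{\phi\in\Phi}C_\phi$ is indexed by $\Phi$, which lives in $\b V[G]$, so $C$ need not lie in $\b V$ and the expression $\b V[G\restriction C]$ is not a priori a factor of a product decomposition over $\b V$. The fix is immediate: once you have chosen $\xi\in B\setminus C$ (which is fine, since $|C|<|B|$ in $\b V[G]$), replace your $\b V'$ by $\b V[G\restriction(A\setminus\st{\xi})]$. This model is well-defined because $A\setminus\st{\xi}\in\b V$; every $\phi\in\Phi$ lies there since each $C_\phi\subset A\setminus\st{\xi}$; and $G\restriction\st{\xi}$ is $\forcing{h_\xi}$-generic over it by the product decomposition, with $S_\xi$ still stationary by ${<}\kappa$-closure, so your appeal to the proof of \Cref{not having Sacks} goes through unchanged.

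The paper's presentation avoids this wrinkle by never passing to a single intermediate model: it runs the density argument for each slalom $\phi_\xi$ separately, directly in $\b V$, using only that the chosen coordinate $\beta$ lies outside the individual support $A_\xi$. Concretely, given $p$ and $\alpha_0$, it first picks $\alpha\in S_\beta$ in the club of ``full'' splitting levels of $p(\beta)$, then extends $p$ on $A_\xi$ alone (keeping $p(\beta)$ fixed) to decide $\dot\phi_\xi(\check\alpha)$ as some $Y$ with $|Y|\leq F(\alpha)<h_\beta(\alpha)$, and finally thins $p(\beta)$ to a single successor avoiding $Y$. Your intermediate-model packaging is arguably cleaner conceptually --- it reduces the product statement to the single-forcing \Cref{not having Sacks} --- at the cost of the extra observation that stationarity of $S_\xi$ survives into the larger intermediate model.
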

\begin{proof}
The lemma is trivial if $|B|\leq \kappa^+$, so we will assume that $|B|\geq\kappa^{++}$.

We work in $\b V[G]$. Let $\mu<|B|$ and let $\st{\phi_\xi\mid \xi<\mu}$ be a family of $F$-slaloms, then we want to describe some $f\in\gbs$ such that $f\notin^*\phi_\xi$ for each $\xi<\mu$. Since $\bar{\bb S}$ is ${<}\kappa^{++}$-cc, we could find $A_\xi\subset A$ with $|A_\xi|\leq \kappa^+$ for each $\xi<\mu$ such that $\phi_\xi\in \b V[G\restriction A_\xi]$. Since $|B|>\mu\cdot \kappa^+$, we may fix some $\beta\in B\setminus\Cup_{\xi<\mu}A_\xi$ for the remainder of this proof. Let $f=\Cap_{p\in G}p(\beta)$, then $f\in\gbs$ is the generic $\kappa$-real added by the $\beta$-th term of the product $\bar{\bb S}$. 

Continuing the proof in the ground model, let $\dot f$ be an $\bar {\bb S}$-name for $f$ and $\dot \phi_\xi$ be an $\bar{\bb S}$-name for $\phi_\xi$, let $p\in\bar{\bb S}$ and $\alpha_0<\kappa$. We want to find some $\alpha\geq\alpha_0$ and $q\leq p$ such that $q\fc\ap{\dot f(\check \alpha)\notin \dot \phi_\xi(\check \alpha)}$.

Let $C=\st{\alpha<\kappa\mid p(\beta)\cap {}^\alpha\kappa=\Split_\alpha(p(\beta))}$, which is a club set by \Cref{splitting nodes club}. Since $S_\beta$ is stationary, there exists some $\alpha\geq\alpha_0$ such that $\alpha\in C\cap S_\beta$. Choose some $p_0\leq p$ such that $p_0(\beta)=p(\beta)$ and such that there is a $Y\in[\kappa]^{\leq F(\alpha)}$ for which $p_0\fc\ap{\dot \phi_\xi(\check\alpha)=\check Y}$. This is possible, since $\phi_\xi\in\b V[G\restriction A_\xi]$ and $\beta\notin A_\xi$, therefore we could find $p_0'\in\bar{\bb S}\restriction A_\xi$ with $p_0'\leq p\restriction A_\xi$ and $Y$ with the aforementioned property, and then let $p_0(\eta)=p'_0(\eta)$ if $\eta\in A_\xi$ and $p_0(\eta)=p(\eta)$ otherwise.

Each $t\in p_0(\beta)\cap {}^\alpha\kappa$ is a $h_\beta(\alpha)$-splitting node, hence the set $X=\st{\chi<\kappa\mid t^\frown \chi \in p_0(\beta)}$ has cardinality $|X|\geq h_\beta(\alpha)$. Because $\alpha\in S_\beta$ and $\beta\in B$, we have by our assumptions on $F$ that $|Y|\leq F(\alpha)<h_\xi(\alpha)=|X|$. We can therefore find some $\chi\in X$ such that $\chi\notin Y$. Let $q\leq p_0$ be defined as $q(\beta)=(p_0(\beta))_{t^\frown \chi}$, which is the subtree of $p_0(\beta)$ generated by the initial segment $t^\frown \chi$, and $q(\eta)=p_0(\eta)$ for all $\eta\neq \beta$. Then $q\leq p_0\leq p$ and $q\fc\ap{\dot f(\check \alpha)\notin \dot\phi_\xi(\check\alpha)}$
\end{proof}

\begin{lmm}\label{continuum size}
Let $A$ be a set of ordinals, let $\ab{h_\xi\mid \xi\in A}$ be a sequence of functions, let $\bar{\bb S}=\prod_{\xi\in A}\forcing{h_\xi}$ with $\bar {\bb S}$-generic $G$, and let $F\in\gbs$. Then $\b V\md\ap{2^\kappa=\kappa^+}$ implies $\b V[G]\md\ap{2^\kappa=|\Loc_F|=\kappa^+\cdot |A|}$.
\end{lmm}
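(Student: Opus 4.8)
The plan is to compute both the lower and upper bounds for $2^\kappa$ in $\b V[G]$ and then observe that $|\Loc_F|$ is squeezed between them. First I would establish the upper bound $2^\kappa \leq \kappa^+ \cdot |A|$ in $\b V[G]$ via a nice-names argument. By \Cref{having sacks product} (or rather its corollary) combined with the $\Delta$-system argument mentioned after \Cref{having sacks product}, $\bar{\bb S}$ is ${<}\kappa^{++}$-cc, so every maximal antichain has size ${\leq}\kappa^+$; also $|\bar{\bb S}| \leq |\c C| = \prod_{\xi\in A}|\forcing{h_\xi}| \leq (\kappa^+)^{|A|}$, but more carefully a condition with support of size ${\leq}\kappa$ is determined by a subset of $A$ of size ${\leq}\kappa$ together with a function into a set of size $\kappa^+$, so $|\bar{\bb S}| \leq |A|^\kappa \cdot (\kappa^+)^\kappa = |A|^\kappa \cdot \kappa^+$. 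Hmm — to get the clean bound $\kappa^+ \cdot |A|$ I should instead count nice names directly: a nice name for a subset of $\kappa$ is a function assigning to each $\alpha < \kappa$ an antichain in $\bar{\bb S}$ of size ${\leq}\kappa^+$; each such antichain is determined by ${\leq}\kappa^+$ conditions, each supported on a ${\leq}\kappa$-sized subset of $A$, hence the number of relevant conditions is ${\leq}|A|^\kappa \cdot \kappa^+$, and assuming $|A| \geq \kappa^{++}$ (the interesting case; otherwise $2^\kappa = \kappa^+$ by standard ${<}\kappa^{++}$-cc preservation of $2^\kappa = \kappa^+$) this is $|A|$. So the number of nice names is at most $(|A|)^{\kappa^+ \cdot \kappa} = |A|$ when $\kappa^+ < |A|$ and $|A|^{<\kappa} = |A|$ — using that $|A|$ has large cofinality or simply bounding crudely by $|A|^{\kappa^+}$ and then arguing this equals $\kappa^+ \cdot |A|$ under $2^\kappa = \kappa^+$ in $\b V$. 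The cleanest route: under $\b V \md 2^\kappa = \kappa^+$, we have $(\kappa^+)^\kappa = \kappa^+$, so $|\bar{\bb S}| = \max(\kappa^+, |A|)= \kappa^+ \cdot |A|$, and then the number of nice names is at most $|\bar{\bb S}|^{\kappa^+} = (\kappa^+ \cdot |A|)^{\kappa^+}$; I would need $(\kappa^+\cdot|A|)^{\kappa^+} = \kappa^+\cdot|A|$, which holds iff $|A|^{\kappa^+} = |A|$, i.e. I may need the hypothesis that $|A|$ has cofinality $> \kappa^+$, or restrict attention to that case. In the paper's application $|A|$ will be chosen with this property, so I will simply note $(\kappa^+\cdot|A|)^{\kappa^+} = \kappa^+\cdot|A|$ holds for the cardinals $|A|$ we consider (or more honestly: assuming GCH-like behavior in $\b V$ this is automatic).

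Second I would establish the lower bound $2^\kappa \geq \kappa^+ \cdot |A|$ in $\b V[G]$. That $2^\kappa \geq \kappa^+$ is immediate. For $2^\kappa \geq |A|$: the product adds, for each $\xi \in A$, a generic $\kappa$-real $f_\xi = \bigcap_{p\in G} p(\xi) \in \gbs$, and distinct coordinates give distinct reals — indeed if $\xi \neq \eta$, a density argument shows $f_\xi \neq f_\eta$ is forced, since below any $p$ we can extend the stems of $p(\xi)$ and $p(\eta)$ to disagree (both trees are perfect in the relevant sense, having splitting nodes club). So $\b V[G]$ contains $|A|$ many distinct elements of $\gbs$, giving $2^\kappa \geq |A|$. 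Combining, $2^\kappa = \kappa^+ \cdot |A|$ in $\b V[G]$.

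Third, I would pin down $|\Loc_F|$. An $F$-slalom is a function $\phi : \kappa \to [\kappa]^{<\kappa}$ with $|\phi(\alpha)| \leq |F(\alpha)|$, so $|\Loc_F| \leq (\sup_{\alpha<\kappa}|[\kappa]^{\leq F(\alpha)}|)^\kappa \leq (2^\kappa)^\kappa = 2^\kappa$ (using $\kappa^{<\kappa} = \kappa$ and that $F(\alpha) < \kappa$, so $[\kappa]^{\leq F(\alpha)}$ has size $\kappa^{F(\alpha)} = 2^\kappa$ at worst — actually $\leq \kappa^\kappa = 2^\kappa$). Conversely $|\Loc_F| \geq 2^\kappa$: since $F$ is a cofinal increasing cardinal function, pick $\alpha$ with $F(\alpha) \geq 2$ (in fact $F(\alpha)\geq\omega$ eventually), and the slaloms that are constant, say $\emp$, off a single coordinate $\alpha$ and take an arbitrary subset of $\kappa$ of size $\leq F(\alpha)$ at $\alpha$ already number $\geq 2^\kappa$ when $F(\alpha) \geq \kappa$; if $F$ is bounded by a function with all values $< \kappa$ this particular trick gives only $\kappa^{F(\alpha)}$, so instead I vary $\phi$ on a club of coordinates: fixing for each $\alpha$ in a cofinal set a pair of candidate values, the $2^\kappa$ branches through $\fcs$ inject into $\Loc_F$. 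Hence $|\Loc_F| = 2^\kappa = \kappa^+ \cdot |A|$ in $\b V[G]$, completing the proof.

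The main obstacle I expect is the bookkeeping in the upper bound: getting the nice-names count to collapse exactly to $\kappa^+ \cdot |A|$ rather than something like $|A|^{\kappa^+}$ requires either a cardinal-arithmetic hypothesis on $|A|$ (namely $|A|^{\kappa^+} = |A|$, equivalently $\cf(|A|) > \kappa^+$ together with GCH-below behavior, or just $2^\kappa = \kappa^+$ in $\b V$ propagating up the relevant exponents) — I would handle this by invoking $\b V \md 2^\kappa = \kappa^+$ to get $(\kappa^+)^\kappa = \kappa^+$ and then, for the cardinals $|A|$ actually used in the consistency result, noting $|A|^{\kappa^+} = |A|$; alternatively I could state the lemma's conclusion with the understanding that $|A|$ ranges over cardinals satisfying this, which is harmless for the application. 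Everything else (the genericity/density arguments for distinctness of the $f_\xi$ and for the lower bound on $|\Loc_F|$, and the ${<}\kappa^{++}$-cc from the $\Delta$-system lemma) is routine and already set up by the earlier results in the paper.
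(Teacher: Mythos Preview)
The paper's own ``proof'' is a single sentence: \emph{This is a standard argument.} Your proposal is precisely the standard argument one would supply --- lower bound via the $|A|$ many pairwise distinct generic $\kappa$-reals, upper bound via counting nice names using the ${<}\kappa^{++}$-cc, and the trivial computation $|\Loc_F|=2^\kappa$ --- so in that sense your approach and the paper's (implicit) approach coincide.

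Your worry about the upper bound is well-founded and worth flagging. The nice-names count really only yields $(2^\kappa)^{\b V[G]}\leq |\bar{\bb S}|^{\kappa^+}$, and under the sole hypothesis $2^\kappa=\kappa^+$ one gets $|\bar{\bb S}|\leq |A|^{\kappa}\cdot\kappa^+$; collapsing $(|A|^\kappa\cdot\kappa^+)^{\kappa^+}$ down to $\kappa^+\cdot|A|$ genuinely requires $|A|^{\kappa^+}=|A|$ (and in particular $2^{\kappa^+}\leq\kappa^+\cdot|A|$), which is not a consequence of $2^\kappa=\kappa^+$ alone. The paper silently relies on this holding for the cardinals $\lambda_\xi$ used in the main theorem (where one is free to assume $\s{GCH}$ in the ground model, or at least that each $\lambda_\xi$ satisfies $\lambda_\xi^{\kappa^+}=\lambda_\xi$). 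So your diagnosis is correct: the lemma as literally stated is slightly imprecise, and your proposed fix --- restrict to $|A|$ with $|A|^{\kappa^+}=|A|$, which is harmless for the application --- is exactly the right move.
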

\begin{proof}
This is a standard argument.
\end{proof}

We're now ready to use our product forcing to separate many cardinals of the form $\dstar{h}$.
\begin{thm}
There exists a family of functions $\st{g_\eta\mid \eta<\kappa}\subset \gbs$ such that for any $\gamma<\kappa^+$ and any increasing sequence $\ab{\lambda_\xi\mid \xi<\gamma}$ of cardinals with $\lambda_0\geq2^\kappa$ and any $\sigma:\kappa\to\gamma$, there exists a forcing extension in which $\dstar{g_\eta}=\lambda_{\sigma(\eta)}$ for all $\eta<\kappa$.
\end{thm}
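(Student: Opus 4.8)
The plan is to engineer the family $\{g_\eta \mid \eta < \kappa\}$ together with an auxiliary family of "target" functions $\{F_\gamma \mid \gamma < \kappa^+\}$ so that, for each $\gamma$, the function $g_\eta$ with $\sigma(\eta)=\gamma$ is the "small" function whose localisation cardinal we want to push up to $\lambda_\gamma$, while all the other $g_{\eta'}$ ($\sigma(\eta')\neq\gamma$) are "large enough" not to interfere. Concretely I would iterate the two Theorems \Cref{having Sacks} and \Cref{not having Sacks}: starting from a bookkeeping enumeration, build functions $g_\eta$ and stationary sets $S_\eta$ (e.g.\ a partition of $\kappa$ into $\kappa$ many stationary pieces, one $S_\eta$ for each $\eta$) so that along $S_\eta$ the function $g_\eta$ dominates $g_{\eta'}$ for all $\eta' \neq \eta$; this is arranged by a diagonal construction of length $\kappa$, at stage $\eta$ choosing $g_\eta$ to grow faster than every previously chosen $g_{\eta'}$ on $S_\eta$ and also faster than the "lookahead" needed for later stages --- a standard closing-off argument using that there are only $\kappa$ functions and $\kappa$ is regular. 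Set $F$ (for use in \Cref{having sacks product}) to be a fixed function dominating $\alpha\mapsto(\sup_{\eta<\kappa} g_\eta(\alpha))^{|\alpha|}$; by inaccessibility of $\kappa$ and the cofinality conventions this is a legitimate element of $\gbs$, provided we also ensure the $g_\eta$ are chosen with $\sup_\eta g_\eta(\alpha)<\kappa$ for each $\alpha$, which the construction can guarantee.

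Next, given the data $\gamma < \kappa^+$, the increasing sequence $\langle\lambda_\xi \mid \xi<\gamma\rangle$ with $\lambda_0 \geq 2^\kappa$, and $\sigma:\kappa\to\gamma$, I would define the index set of the product. For each $\xi<\gamma$ let $A_\xi$ be a set of ordinals of size $\lambda_\xi$, take these pairwise disjoint, put $A=\bigsqcup_{\xi<\gamma} A_\xi$, and to each coordinate in $A_\xi$ attach the forcing $\forcing{g_{\eta_\xi}}$ where $\eta_\xi$ is chosen so that $\sigma(\eta_\xi)=\xi$ --- more precisely, since $\dstar{g_\eta}$ depends on $\eta$ through $g_\eta$, I want: for each $\eta<\kappa$ the forcings appearing in the product that have the $g_\eta$-Sacks property should number exactly $\lambda_{\sigma(\eta)}$, while everything else has a Sacks property strong enough not to bound $\dstar{g_\eta}$ from above and weak enough (on the relevant stationary set) not to bound it from below. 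Form $\bar{\bb S} = \prod_{\xi\in A}\forcing{h_\xi}$ with ${\leq}\kappa$-support, where $h_{\zeta}=g_{\eta_\xi}$ for $\zeta\in A_\xi$, and let $G$ be $\bar{\bb S}$-generic over a ground model satisfying $2^\kappa=\kappa^+$.

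Then I would verify the two inequalities for each $\eta<\kappa$, writing $\gamma_\eta=\sigma(\eta)$. For the upper bound $\dstar{g_\eta}\leq\lambda_{\gamma_\eta}$: apply the corollary after \Cref{having sacks product} with $B=\emp$ --- the whole product $\bar{\bb S}$ has the $F$-Sacks property, hence preserves a ground-model witness for $\dstar F$; but I need a witness for $\dstar{g_\eta}$ of size $\lambda_{\gamma_\eta}$, so instead I apply \Cref{having sacks product} with $B=A\setminus A_{\gamma_\eta}$ and $B^c=A_{\gamma_\eta}$: every coordinate in $B^c$ carries exactly the forcing $\forcing{g_\eta}$, so $\sup_{\zeta\in B^c}h_\zeta = g_\eta$ and the relevant $F$ in that lemma is $\alpha\mapsto g_\eta(\alpha)^{|\alpha|}$, which is $\leq$ the genuine bound, and the lemma produces, for every $\kappa$-real in $\b V[G]$, a $g_\eta$-slalom (up to renaming the slalom function, which only shrinks it) in $\b V[G\restriction B]$; since $|B|=\kappa^+\cdot\sum_{\xi\neq\gamma_\eta}\lambda_\xi$ --- hmm, this is too big, so the correct split is the reverse: take $B^c$ to be all coordinates not carrying $\forcing{g_\eta}$ and $B$ the $A_{\gamma_\eta}$-block, giving $\sup_{\zeta\in B^c}h_\zeta=\sup\{g_{\eta'} : \eta'\neq\eta \text{ appearing}\}$ and a slalom family living in $\b V[G\restriction A_{\gamma_\eta}]$, which by \Cref{continuum size} has a $\Loc$ of size $\kappa^+\cdot\lambda_{\gamma_\eta}=\lambda_{\gamma_\eta}$ (using $\lambda_{\gamma_\eta}\geq\lambda_0\geq 2^\kappa=\kappa^+$); this is the $F$-Sacks-type property relative to that restriction, and since the $F$ there dominates $g_\eta$ on a club --- no, only off a stationary set does $g_\eta$ exceed it --- we get that every $\kappa$-real is localised by some $F$-slalom in $\b V[G\restriction A_{\gamma_\eta}]$, but an $F$-slalom is a fortiori a $g_\eta$-slalom only where $F\le g_\eta$, so this needs care: the honest statement is that $\dstar{F}$ in $\b V[G\restriction A_{\gamma_\eta}]$ is $\leq\lambda_{\gamma_\eta}$ and $g_\eta$ does not exceed that $F$ (that $F$ was chosen to dominate all $g_{\eta'}$, hence $g_\eta$, everywhere), so $\dstar{g_\eta}\leq\dstar{F}\leq\lambda_{\gamma_\eta}$. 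For the lower bound $\dstar{g_\eta}\geq\lambda_{\gamma_\eta}$: apply \Cref{not having sacks product} with $B=A_{\gamma_\eta}$, the sequence of stationary sets being the constant sequence $S_\eta$ (recall $S_\eta$ witnesses $g_\eta(\alpha) > g_{\eta'}(\alpha)$, but what \Cref{not having sacks product} needs is $F(\alpha)<h_\zeta(\alpha)=g_\eta(\alpha)$ on $S_\eta$ for the relevant target $F$ --- here taking "$F$" in that lemma to be any function that is $\leq g_\eta$ off $S_\eta$ and strictly below $g_\eta$ on $S_\eta$, which is automatic), yielding $\b V[G]\md |A_{\gamma_\eta}|\leq\dstar{g_\eta}$, i.e.\ $\lambda_{\gamma_\eta}\leq\dstar{g_\eta}$. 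Combined with \Cref{continuum size} (which also pins $2^\kappa = \kappa^+\cdot|A| = \lambda_{\gamma-1}$ or $\sup_\xi\lambda_\xi$, consistent with $\lambda_0\geq 2^\kappa$ in the \emph{ground} model), we conclude $\dstar{g_\eta}=\lambda_{\sigma(\eta)}$ for every $\eta<\kappa$.

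The main obstacle --- and the step that deserves the most care --- is the construction of the $g_\eta$ in the first paragraph so that the domination relationships are exactly right: each $g_\eta$ must dominate every other $g_{\eta'}$ on its own stationary set $S_\eta$ (so that \Cref{not having sacks product} gives the lower bounds without collision), while simultaneously a single master function $F$ dominates all of them everywhere (so that \Cref{having sacks product} / \Cref{continuum size} give matching upper bounds through a restricted submodel of controlled size). One has to check these demands are not contradictory: they are compatible because "dominating on a stationary set" is much weaker than "dominating everywhere", and a diagonal construction of length $\kappa$ over a fixed stationary partition $\langle S_\eta\mid\eta<\kappa\rangle$ can meet all of them --- at stage $\eta$ one only needs $g_\eta$ to outgrow the $<\kappa$ many functions $g_{\eta'}$ ($\eta'<\eta$) on $S_\eta$ and to stay below a pre-fixed ceiling $\alpha\mapsto$ (something like $\beth_{|\alpha|}$ or any fixed fast cofinal function) so that the supremum $\sup_\eta g_\eta(\alpha)$ stays $<\kappa$ and $F$ exists --- but one must also retro-fit the earlier $g_{\eta'}$ with the requirement that they be dominated by $g_\eta$ off of... no: the relation "$g_\eta$ dominates $g_{\eta'}$ on $S_\eta$" for $\eta'>\eta$ is a constraint on the \emph{later} function relative to the \emph{earlier} one evaluated on $S_\eta$, which is fine; the genuinely delicate point is handling the pairs $(\eta,\eta')$ with $\eta'>\eta$ where we need $g_{\eta'}\!\restriction\!S_\eta$ to \emph{not} dominate $g_\eta\!\restriction\!S_\eta$, equivalently $g_\eta$ to be cofinally often above $g_{\eta'}$ on $S_\eta$ --- this too is arranged at stage $\eta'$ by choosing $g_{\eta'}$ small (below all previous $g$'s) on a cofinal subset of each earlier $S_{\eta''}$, $\eta''<\eta'$, while being large on $S_{\eta'}$; since there are $<\kappa$ previous stationary sets and $\kappa$ is regular, this is a routine simultaneous-bookkeeping construction, and I would present it as a lemma preceding the theorem.
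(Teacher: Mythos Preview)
Your proposal has a genuine structural gap: the direction of the monotonicity of $\dstar{\cdot}$ is reversed, and this breaks both the upper and the lower bound.

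Recall that if $F\leq g$ pointwise, then every $F$-slalom is a $g$-slalom, hence a witnessing family for $\dstar F$ is also one for $\dstar g$, giving $\dstar g\leq\dstar F$. In your upper-bound argument you take $B=A_{\gamma_\eta}$, so the $F$ produced by \Cref{having sacks product} satisfies $F(\alpha)\geq h_\zeta(\alpha)=g_{\eta'}(\alpha)$ for every $\eta'\neq\eta$; by your own design each such $g_{\eta'}$ strictly dominates $g_\eta$ on $S_{\eta'}$, so $F\geq g_\eta$ fails to give $\dstar{g_\eta}\leq\dstar F$ --- it gives the opposite inequality. For the lower bound you invoke \Cref{not having sacks product} with $B=A_{\gamma_\eta}$ and target $g_\eta$; but the hypothesis there is $g_\eta(\alpha)<h_\zeta(\alpha)$ on a stationary set, and your $h_\zeta$ \emph{is} $g_\eta$ on that block, so the strict inequality is impossible. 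Passing to an auxiliary $F<g_\eta$ does not help: that would bound $\dstar F$ from below, and $F\leq g_\eta$ only yields $\dstar{g_\eta}\leq\dstar F$, the wrong direction again.

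The paper avoids both problems by separating the target functions $g_\eta$ from the forcing functions and, crucially, by making $g_\eta$ \emph{small} (equal to a fixed $F$) on $S_\eta$ and large ($2^{F}$) elsewhere --- the opposite of your ``$g_\eta$ dominates on $S_\eta$'' design. It then introduces a $\sigma$-dependent, pointwise \emph{decreasing} family $H_\xi$ (small on $\bigcup_{\xi'<\xi}S_{\sigma^{-1}(\xi')}$, large elsewhere) and forces with $H_\xi$ on the block $A_\xi$. The monotonicity of $\ab{H_\xi}$ is what makes the split $B=\bigcup_{\xi'\leq\sigma(\eta)}A_{\xi'}$ work: on $B^c$ every forcing function is $\leq H_{\sigma(\eta)+1}\leq g_\eta$, so \Cref{having sacks product} yields $H_{\sigma(\eta)+1}$-slaloms (hence $g_\eta$-slaloms) living in $\b V[G\restriction B]$ with $|B|=\lambda_{\sigma(\eta)}$; and on $A_{\sigma(\eta)}$ the forcing function $H_{\sigma(\eta)}$ equals $2^{F}$ on $S_\eta$ while $g_\eta=F$ there, giving the strict gap needed for \Cref{not having sacks product}. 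Your symmetric diagonal construction cannot produce such a linearly ordered family, and no rearrangement of the blocks will simultaneously make $B$ have size $\lambda_{\sigma(\eta)}$ and make $\sup_{\zeta\in B^c}h_\zeta\leq g_\eta$.
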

\begin{proof}
We assume that $\b V\md\ap{2^\kappa=\kappa^+}$, or otherwise we first use a forcing to collapse $2^\kappa$ to become $\kappa^+$. By a result of Solovay (theorem 8.10 in \cite{Jech}) there exists a family of $\kappa$ many disjoint stationary subsets of $\kappa$, thus let $\st{S_\eta\mid \eta<\kappa}$ be such a family. Let $\kappa\leq \gamma<\kappa^+$ and $\sigma:\kappa\to \gamma$ be given. We will assume without loss of generality that $\sigma$ is bijective, and hence that $\sigma^{-1}:\gamma\to \kappa$ is a well-defined bijection. Let $\ab{\lambda_\xi\mid \xi<\gamma}$ be an increasing sequence of cardinals with $\lambda_0\geq \kappa^+$.

Fix some $F$ such that $F(\alpha)^{|\alpha|}=F(\alpha)$. For each $\eta<\kappa$ we define a function $g_\eta$ as follows:
\begin{align*}
g_\eta(\alpha)=\begin{cases}
F(\alpha)&\text{ if }\alpha\in S_\eta\\
2^{F(\alpha)}&\text{ otherwise}
\end{cases}
\end{align*}
For each $\xi<\gamma$ we define $H_\xi\in \gbs$ as follows:
\begin{align*}
H_\xi(\alpha)=\begin{cases}
F(\alpha)&\text{ if }\alpha\in\Cup_{\xi'<\xi}S_{\sigma^{-1}(\xi')}\\
2^{F(\alpha)}&\text{ otherwise}
\end{cases}
\end{align*}
Note that $g_{\eta}\geq H_{\sigma(\eta)+1}$: if $\alpha\in S_\eta$, then $\alpha\in S_{\sigma^{-1}(\sigma(\eta))}$, so $\alpha\in \Cup_{\xi<\sigma(\eta)+1} S_{\sigma^{-1}(\xi)}$, and thus $H_{\sigma(\eta)+1}(\alpha)=F(\alpha)$. Therefore $\s{ZFC}\ent\ap{\dstar{g_{\eta}}\leq \dstar{H_{\sigma(\eta)+1}}}$.

For each $\xi<\gamma$ let $A_\xi$ be a set of ordinals with $|A_\xi|=\lambda_\xi$, such that $\ab{A_\xi\mid \xi<\gamma}$ is a sequence of mutually disjoint sets, and let $A=\Cup_{\xi<\gamma} A_\xi$. For each $\xi<\gamma$ and $\beta\in A_\xi$, we define $h_\beta\in \gbs$ as $h_\beta=H_{\xi}$. 

The forcing that we will consider is $\bar{\bb S}=\prod_{\beta\in A}\forcing{h_\beta}$ with ${\leq}\kappa$-support. Let $G$ be $\bar{\bb S}$-generic. We will fix some $\xi<\gamma$ for the remainder, and let $B=\Cup_{\xi'\leq \xi}A_{\xi'}$ and $B^c=A\setminus B=\Cup_{\xi<\xi'<\gamma}A_{\xi'}$. Since we assume $F(\alpha)^{|\alpha|}=F(\alpha)$, we see $\sup_{\beta\in B^c}h_\beta(\alpha)=\sup_{\xi<\xi'<\gamma}H_{\xi'}(\alpha)=H_{\xi+1}(\alpha)$ and therefore $H_{\xi+1}:\alpha\mapsto(\sup_{\beta\in B^c}h_\beta(\alpha))^{|\alpha|}$.

By \Cref{continuum size} we see that $(\Loc_{H_{\xi+1}})^{\b V[G\restriction B]}$ has cardinality $\kappa^+\cdot |B|=\kappa^+\cdot |\sup_{\xi'\leq \xi}A_{\xi'}|=\lambda_\xi$. Combining this with \Cref{having sacks product}, we see that there exists a family in $\b V[G]$ of size $\lambda_\xi$ that forms a witness for $\b V[G]\md\ap{\dstar{H_{\xi+1}}\leq \lambda_\xi}$.

Finally, for each $\eta<\kappa$ we see that if $\alpha\in S_{\eta}$ and $\beta\in A_{\sigma(\eta)}$, then $h_\beta(\alpha)=H_{\sigma(\eta)}(\alpha)=2^{F(\alpha)}$, while $g_\eta(\alpha)=F(\alpha)$, thus using \Cref{not having sacks product} we see that $\b V[G]\md\ap{\lambda_{\sigma(\eta)}=|A_{\sigma(\eta)}|\leq \dstar{g_\eta}}$.

In conclusion, $\b V[G]\md\ap{\lambda_{\sigma(\eta)}\leq\dstar{g_\eta}\leq\dstar{H_{\sigma(\eta)+1}}\leq\lambda_{\sigma(\eta)}}$ for each $\eta<\kappa$ 
\end{proof}
\begin{crl}
\renewcommand{\qedsymbol}{$\square$}
There exists functions $h_\xi$ for each $\xi<\kappa$ such that for any cardinals $\lambda_\xi>\kappa$ it is consistent that simultaneously $\dstar{h_\xi}=\lambda_\xi$ for all $\xi<\kappa$.\qedhere
\end{crl}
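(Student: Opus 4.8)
The plan is to read the corollary off the preceding theorem by a routine repackaging. Since it claims only a relative consistency, we work from the start in a ground model $\b V$ satisfying $2^\kappa=\kappa^+$ on top of the standing assumption that $\kappa$ is inaccessible --- any model of $\s{ZFC}$ with an inaccessible becomes one after a $\leq\kappa$-closed collapse of $2^\kappa$ onto $\kappa^+$, which is exactly the preliminary move used in the proof of the preceding theorem, and the target cardinals are to be chosen in this $\b V$. Now fix the family $\st{g_\eta\mid\eta<\kappa}\subset\gbs$ furnished by that theorem --- recall it is produced in $\b V$ from a fixed partition of $\kappa$ into $\kappa$ many stationary sets together with a fixed $F$ satisfying $F(\alpha)^{|\alpha|}=F(\alpha)$ --- and set $h_\xi:=g_\xi$ for all $\xi<\kappa$. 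It then remains, given cardinals $\lambda_\xi>\kappa$ for $\xi<\kappa$, to produce the data $(\gamma,\ab{\mu_\zeta\mid\zeta<\gamma},\sigma)$ that the theorem consumes, with $\mu_{\sigma(\eta)}=\lambda_\eta$ for every $\eta$, and to invoke it.

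For this, let $\ab{\mu_\zeta\mid\zeta<\gamma}$ be the strictly increasing enumeration of the set of values $\st{\lambda_\xi\mid\xi<\kappa}$. Then $\gamma=\ot\st{\lambda_\xi\mid\xi<\kappa}\leq\kappa<\kappa^+$, the sequence is strictly increasing, and $\mu_0=\min_{\xi<\kappa}\lambda_\xi\geq\kappa^+=2^\kappa$, so the size hypothesis $\mu_0\geq 2^\kappa$ of the theorem is met automatically (this is precisely what forcing $2^\kappa=\kappa^+$ at the outset buys us). Define $\sigma:\kappa\to\gamma$ by letting $\sigma(\xi)$ be the unique $\zeta$ with $\mu_\zeta=\lambda_\xi$. (If one prefers the enumeration to have length exactly $\kappa$, as the proof of the preceding theorem assumes, pad it with any strictly increasing $\kappa$-sequence of cardinals above $\sup_{\xi<\kappa}\lambda_\xi$; the padded entries never lie in $\ran(\sigma)$ and affect nothing.)

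Applying the preceding theorem to $\gamma$, $\ab{\mu_\zeta\mid\zeta<\gamma}$ and $\sigma$ now delivers a forcing extension $\b V[G]$ in which $\dstar{g_\eta}=\mu_{\sigma(\eta)}=\lambda_\eta$ for every $\eta<\kappa$, and since $h_\xi=g_\xi$ this is exactly the assertion that $\dstar{h_\xi}=\lambda_\xi$ simultaneously for all $\xi<\kappa$. I do not anticipate a real obstacle here --- the whole force of the statement sits in the preceding theorem --- and the only points worth keeping straight are the bookkeeping around the preliminary step that installs $2^\kappa=\kappa^+$ (so that the $\lambda_\xi$ are chosen, or checked to persist, in the right model) and the observation that $\sigma$ need not be injective when several of the $\lambda_\xi$ coincide, so that one must invoke the preceding theorem in the full generality of its statement, which permits an arbitrary $\sigma:\kappa\to\gamma$.
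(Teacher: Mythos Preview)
Your proposal is correct and is precisely the intended derivation: the paper states this as an immediate corollary of the preceding theorem and gives no separate proof, and your repackaging---enumerating the values $\st{\lambda_\xi\mid\xi<\kappa}$ increasingly as $\ab{\mu_\zeta\mid\zeta<\gamma}$ with $\gamma\leq\kappa<\kappa^+$, defining $\sigma(\xi)$ to point to $\lambda_\xi$, and invoking the theorem---is exactly what is meant. Your remarks about first arranging $2^\kappa=\kappa^+$ so that $\mu_0\geq 2^\kappa$ is automatic, and about $\sigma$ not needing to be injective, are the only bookkeeping points and you handle them correctly.
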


\nocite{*}
\footnotesize{
\bibliographystyle{alpha}
\bibliography{bibl}
}

\end{document}